\DeclareMathOperator{\R}{{\mathbb{R}}}
\DeclareMathOperator{\Id}{Id}
\DeclareMathOperator{\inte}{int}
\newcommand{\FTCco}{FTC\textsubscript{co}}
\newcommand{\GFTCco}{GFTC\textsubscript{co}}
\theoremstyle{plain}
\newtheorem{theorem}{Theorem}[section]
\newtheorem{claim}[theorem]{Claim}
\newtheorem{corollary}[theorem]{Corollary}
\newtheorem{definition}[theorem]{Definition}
\newtheorem{example}[theorem]{Example}
\newtheorem{lemma}[theorem]{Lemma}
\newtheorem{notation}[theorem]{Notation}
\newtheorem{proposition}[theorem]{Proposition}
\newtheorem{remark}[theorem]{Remark}
\numberwithin{equation}{section}
\begin{document}
\title[WSC is GFTC]{When the Weak Separation Condition implies the Generalized Finite Type Condition}
\author{Kathryn E. Hare}
\address{Dept. of Pure Mathematics\\
University of Waterloo\\
Waterloo, Canada N2L 3G1}
\email{kehare@uwaterloo.ca}
\thanks{KEH was supported by NSERC Grant 2016-03719. KGH was supported by
NSERC Grant 2019-03930. AR was supported by both these grants and the
University of Waterloo}
\author{Kevin G. Hare}
\address{Dept. of Pure Mathematics\\
University of Waterloo\\
Waterloo, Canada N2L 3G1}
\email{kghare@uwaterloo.ca}
\author{Alex Rutar}
\address{Dept. of Pure Mathematics\\
University of Waterloo\\
Waterloo, Canada N2L 3G1}
\email{arutar@uwaterloo.ca}
\subjclass[2000]{Primary 28A80}
\keywords{self-similar set, weak separation condition, finite type,
generalized finite type}
\thanks{This paper is in final form and no version of it will be submitted
for publication elsewhere.}

\begin{abstract}
We prove that an iterated function system of similarities on $\R$ that satisfies the weak separation condition and has an interval as its self-similar set is of generalized finite type.
It is unknown if the assumption that the self-similar set is an interval is necessary.
\end{abstract}

\maketitle

\section{Introduction}
Consider an iterated function system (IFS) of similarities $\{S_i\}_{i=1}^k$ on $\mathbb{R}^d$ and let $K$ be its associated self-similar set, the unique non-empty compact set satisfying  $K=\bigcup_{j=1}^{k}S_{j}(K)$. 
The dimensional properties of such sets are well understood if the IFS satisfies the open set condition (OSC), such as the IFS $\{x/3, x/3+2/3\}$ whose self-similar set is the classical middle-third Cantor set.
We refer the reader to \cite{Fa} and the many references cited there.
As many interesting IFS, including those associated with the much studied self-similar measures known as Bernoulli convolutions, do not have this property, Lau and Ngai in \cite{LN} introduced the weak separation condition (WSC), which permits limited types of overlap.
IFSs that satisfy this weaker property have also been intensively studied.
For instance, in \cite{Ze} Zerner proved many geometric and analytic equivalences, Feng and Lau in \cite{FL} obtained deep results about the multifractal analysis of associated self-similar measures, and Fraser et al in \cite{FHOR} showed that any self-similar set arising from an IFS satisfying this condition and not contained in a hyperplane is Ahlfors regular and hence its Hausdorff and Assouad dimensions coincide.
However, even basic concepts, such as explicitly computing the Hausdorff dimension of the self-similar set, can be challenging.

Many interesting examples of IFS satisfying the weak separation condition have a type of combinatorial overlap structure.
As a result, in-between separation notions, called the finite type condition (FTC) and the generalized finite type condition (GFTC), were introduced in \cite{NW} and \cite{LN2} respectively.
A class of examples of IFS of finite type that do not satisfy the OSC are the systems $\{\rho x, \rho x+1-\rho\}$ where $\rho>1/2$ is the inverse of a Pisot number.
These examples are particularly interesting as they are associated with the only Bernoullli convolutions known to be singular.
We refer the reader to \cite{Sh} and the survey paper \cite{Va} for more details on this much studied problem.
IFSs which satisfy the (generalized) finite type condition are much more tractable to study than those which merely satisfy the WSC.
For instance, simple formulas are known for the Hausdorff dimension of $K$ \cite{LN2, NW}, and the multifractal analysis of associated invariant measures is more precisely understood (compare, for example, \cite{F3,F9,HHS}~with~\cite{FL}).
Both finite type and generalized finite type are defined in terms of certain local geometric-combinatorial properties.

Much of the research that has been done on these properties applies to the case where the word `local' is interpreted to mean relative to the bounded invariant set which is the interior of the convex hull of the self-similar set $K$, when the interior is non-empty.
We call this the convex (generalized) finite type condition and write (G)FTC\textsubscript{co}.
The following relationships are well known, (c.f., \cite{DLN, LN2, LNR, Ng}):
\begin{equation}\label{Inc1}
    FTC_{co}\subsetneq GFTC_{co}\subseteq GFTC\subseteq WSC.
\end{equation}
Any IFS of finite type necessarily has logarithmically commensurate contraction factors (in fact, \GFTCco{} with commensurate contraction factors is \FTCco{} \cite{DLN}), so not all sets satisfying the open set condition are finite type.
However 
\begin{equation}\label{Inc2}
    OSC\subsetneq GFTC\subseteq WSC
\end{equation}
and if an IFS satisfies the open set condition with the open set being the interior of the convex hull of $K$, then it satisfies \GFTCco{}.
The question of whether the generalized finite type and weak separation conditions are equivalent was first raised by Lau and Ngai in \cite{LN2}.
They showed that the answer, in general, is negative for IFS on $\R^d$ for $d\geq 2$, but for all known examples, the interior of the convex hull of the self-similar set is empty.
In particular, we are not aware of any IFS on $\R$, with non-singleton attractor, which satisfies the weak separation condition but is not generalized finite type.
% In some sense, the WSC is a geometric separation condition which bounds the growth rate of the number of distinct images of some fixed point under the IFS, whereas GFTC is a combinatorial separation condition requiring that there are only finitely many distinct ``normalized overlaps''.
%we are not aware of any IFS  in $\R$, whose attractor is not a singleton, which satisfies the WSC but not \GFTCco{}.

The main contribution of this paper, Theorem~\ref{t:wsc-wft}, is to partially answer this question.
We prove that if the IFS in $\R$ satisfies the weak separation condition and the self-similar set $K$ is an interval, then the IFS actually satisfies the (stronger) convex generalized finite type condition.

In \cite{F3}, Feng showed that the key features of the local geometry of an IFS in $\R$ with positive and equal contraction factors and satisfying \FTCco{} could be understood in terms of `neighbour sets' and that such IFS can have only finitely many of these.
This property has proven to be very fruitful in studying the multifractal analysis of self-similar measures of finite type, c.f. \cite{F3, F5, HHS}.
Here we generalize a slightly modified notion of a neighbour set to any IFS in $\R$, and say that any IFS which has only finitely many of these more general neighbour sets satisfies the finite neighbour condition.
The weak separation condition for IFS in $\R$ can be characterized in terms of the neighbour sets (Proposition~\ref{l:wsp-l}) and consequently any IFS in $\R$ satisfying the finite neighbour condition satisfies the WSC (Corollary~\ref{c:fnc-wsc}).

In Theorem~\ref{t:wft-fs} we prove that the finite neighbour condition and the convex generalized finite type condition coincide.
Our main result follows by proving that any IFS on $\R$ that has the interval $[0,1]$ as its self-similar set $K$ and satisfies the weak separation condition, has the finite neighbour condition and hence also the \GFTCco{}.
In particular, any such IFS with commensurate contraction factors has property \FTCco{}.
Our proof was inspired by Feng's recent work, \cite{F15}, that showed this under the additional restriction that the similarities have equal and positive contraction factors.

One reason for the interest in resolving the relationship between the weak separation condition and (generalized) finite type is that the geometric structure of the self-similar set and measures associated with an IFS satisfying the (generalized) finite type condition is substantially richer than what is directly implied by the weak separation condition.
For example, it is a consequence of our theorem that any IFS $\{S_{j}\}$ with $K=[0,1]$ and satisfying the WSC has the property that there is a constant $\varepsilon >0$ such that for any $\alpha >0$ and $u,v\in\{0,1\}$, either 
\begin{equation}\label{01sep}
    S_{\sigma}(u)=S_{\tau}(v)\text{ or }\left\vert S_{\sigma}(u)-S_{\tau}(v)\right\vert \geq \varepsilon \alpha
\end{equation}
whenever $S_{\sigma }$ and $S_{\tau }$ are compositions of the similarities $\{S_{j}\}$ with contraction factors approximately $\alpha$  (Corollary~\ref{c:wsc-equiv}).
Indeed, many of the combinatorial formalisms developed by Feng in \cite{F3} for positive, equicontractive IFS in $\R$ of finite type extend to iterated function systems in $\R$ satisfying the finite neighbour condition \cite{Ru}.
As a consequence, it will be shown there that under the assumption of $K=[0,1]$, many results for the multifractal analysis of self-similar measures on $\R$ of finite type, as established in \cite{F3, HHS}, can be extended to self-similar measures associated with IFS on $\R$ satisfying the weak separation condition.
In Example~\ref{ex:1}, we give an example of such an IFS that has non-commensurate contraction factors.
More examples, and more in-depth analysis, can be found in \cite{Ru}.

\section{Geometric structure of self-similar sets}
\subsection{Iterated function systems and separation conditions}
Our focus for the remainder of the paper will be on $\R$.
Thus by an iterated function system (IFS) $\mathcal{S} =\{S_{i}\}_{i=1}^{k}$ we mean a finite set of similarities 
\begin{equation}
    S_{i}(x)=r_{i}x+d_{i}:\mathbb{R}\rightarrow \mathbb{R}\text{ for each } i=1,2,\ldots,k,
\end{equation}%
with $0<\left\vert r_{i}\right\vert <1$ and $k\geq 2$.
The IFS is said to be (positive) equicontractive if all $r_{i}=r$ (and $r>0$).

A subset $V\subseteq\mathbb{R}$ is called invariant if $S_{j}(V)\subseteq V$ for all $j$.
Each IFS generates a unique non-empty, compact invariant set $K$ satisfying 
\begin{equation*}
    K=\bigcup_{j=1}^{k}S_{j}(K).
\end{equation*}
This set $K$ is known as the associated self-similar set.
We will assume $K$ is not a singleton.
By rescaling and translating the $d_{i}$, as needed, without loss of generality we may assume the convex hull of $K$ is $[0,1]$.

If we are also given probabilities $\{p_j\}_{j=1}^k$, meaning $p_j>0$ and $\sum_{j=1}^k p_j=1$,  then there is a unique probability measure $\mu$ satisfying
$$\mu(E)=\sum_{j=1}^k p_j \mu(S_j^{-1}(E))$$ for any Borel set $E \subset \R$.
This measure is referred to as a self-similar measure associated with the IFS and has as its support the self-similar set $K$.

For example, if we take the IFS $\{S_1(x)=x/3,S_2(x)=x/3+2/3\}$ and probabilities $p_1=1/2=p_2$, the self-similar set is the classical middle-third Cantor set and the self-similar measure is the uniform Cantor measure.
If we take the IFS $\{\rho x,\rho x+1-\rho\}$ with $0<\rho<1$ and the same equal probabilities, the self-similar measure is the Bernoulli convolution with parameter $\rho$.

\begin{definition}
The IFS $\mathcal{S}=\{S_{j}\}$ is said to satisfy the \textbf{open set condition } (OSC) if there is a non-empty bounded invariant open set $V$ such that $S_{i}(V)\cap S_{j}(V)$ is empty for all $i\neq j$.
\end{definition}
The IFS $\{x/3,x/3+2/3\}$ is such an example.

In contrast, the weak separation condition allows restricted overlap.
We introduce further notation to formally define this.
Let $\Sigma =\{1,\ldots ,k\}$ and $\Sigma^{\ast }$ denote the set of all the finite words on $\Sigma $.
Given $\sigma =(\sigma_{1},\ldots ,\sigma_{j})\in \Sigma^{\ast }$, we put 
\begin{equation*}
    \sigma^{-}=(\sigma_{1},\ldots ,\sigma_{j-1})\text{, }S_{\sigma }=S_{\sigma_{1}}\circ \cdots \circ S_{\sigma_{j}}\text{ and }r_{\sigma }=\prod_{i=1}^{j}r_{\sigma_{i}}.
\end{equation*}%
Given $\alpha >0,$ put
\begin{equation*}
    \Lambda_{\alpha }=\{\sigma \in \Sigma^{\ast }:|r_{\sigma }|<\alpha \leq |r_{\sigma^{-}}|\}.
\end{equation*}
We refer to $\sigma \in \Lambda_{\alpha }$ as the words of generation $\alpha $.
We remark that in the literature it is more common to see this defined by the rule $|r_{\sigma }|\leq \alpha <|r_{\sigma^{-}}|$.
The two choices are essentially equivalent, but this choice is more convenient for our purposes.

There are many equivalent ways to define the weak separation condition.
The following is item (5) on Zerner's list of equivalences in \cite[Thm. 1]{Ze}, and is the one of most use to us in this paper.

\begin{definition}\label{d:WSC}
    The IFS is said to satisfy the \textbf{weak separation condition} (WSC) if there is some $x_{0}\in \R$ and integer $N$ (or, equivalently, for all $x$ there is some $N$) such that for any $\alpha >0$ and finite word $\tau$, any closed ball with radius $\alpha$ contains no more than $N$ distinct points of the form $S_{\sigma }(S_{\tau }(x_{0}))$ for $\sigma \in \Lambda_{\alpha }$.
\end{definition}

It is well known that any IFS satisfying the open set condition satisfies the weak separation condition, but not conversely.
Examples of IFS that satisfy the WSC, but not the OSC, include the IFS $\{\rho x,\rho x+1-\rho \}$ where $\rho >1/2 $ is the inverse of a Pisot number, as well as the IFS $\left\{ x/d+j(d-1)/(md):j=0,...,m\right\}$ where $m\geq d\geq 2$ are integers.
With an appropriate choice of probabilities, the self-similar measure associated with the second IFS is the $m$-fold convolution of the uniform Cantor measure on the Cantor set with ratio of dissection $1/d$.
Convolutions of the Cantor measure are examples of invariant measures with interesting multifractal structure.
For instance, the $3$-fold convolution of the middle-third Cantor measure was the first example discovered to have an isolated point in its set of local dimensions \cite{HL}.

Both these families of examples actually satisfy a stronger separation condition known as finite type, a notion introduced by Ngai and Wang in \cite{NW}.
To explain this, and the more general notion of the generalized finite type condition introduced by Lau and Ngai in \cite{LN2}, we need further notation.

\begin{notation}
    For any set $V\subseteq \mathbb{R}$ let 
    \begin{equation}\label{ESNotation}
        \mathcal{E}_{\mathcal{S}}(V)=\bigcup\limits_{\alpha >0}\left\{S_{\sigma }^{-1}\circ S_{\tau }:\sigma ,\tau \in \Lambda_{\alpha},S_{\sigma }(V)\cap S_{\tau }(V)\neq \emptyset \right\}.
    \end{equation}
\end{notation}

It is immediate from the definition that the IFS $\mathcal{S}=\{S_{j}\}$ satisfies the OSC with the bounded, invariant open set $V$ precisely when $\mathcal{E}_{\mathcal{S}}(V)$ consists of simply the identity map.

\begin{definition}
    (i) The IFS $\mathcal{S}=\{S_{j}\}$ is said to be of \textbf{generalized finite type}, or satisfy the \textbf{generalized finite type condition} (GFTC), if $\mathcal{E}_{\mathcal{S}}(V)$ is finite for some non-empty bounded invariant open set $V$.

    (ii) The IFS $\mathcal{S}=\{S_{j}\}$ is said to be of \textbf{finite type}, or satisfy the \textbf{finite type condition} (FTC) if, in addition, the contraction factors of the $S_{j}$ are logarithmically commensurate.
\end{definition}

By logarithmically commensurate we mean that the contraction factors $\{r_{i}\}$ have the property that for all $i,j$, $\log \left\vert r_{i}\right\vert /\log \left\vert r_{j}\right\vert \in \mathbb{Q}$.
We remark that the definitions given above were not the original definitions, but were proven to be equivalent by Deng et al in \cite[Thm. 4.1]{DLN}.

 Lau and Ngai in \cite{LN2} show that the IFS $\{\rho x, rx+\rho (1-r), rx+1-r\}$ for $0<\rho, r<1$ and $\rho+2r-\rho r \le 1$ is of generalized finite type, but not, in general, of finite type or satisfy the OSC. 

One of the main accomplishments of Feng in \cite{F3} was to show that positive, equicontractive IFS of finite type with the invariant open set $V=(0,1)$ have a special geometric structure which is very useful in studying both the self-similar set and the multifractal analysis of associated self-similar measures. 

In this paper, we will see that a similar geometric structure also holds for IFS of  generalized finite type when $V=(0,1)$ and hence we give this special case a name.

\begin{definition}
    The IFS is said to satisfy the \textbf{convex (generalized) finite type condition} %(\GFTCco{})
((G)FTC$_{co}$) if it is of (generalized) finite type with the non-empty bounded invariant open set being the interior of the convex hull of $K$.
\end{definition}
\begin{remark}
We prefer to express this in terms of the convex hull of $K,$ rather than $(0,1)$, as the convex hull will generalize to IFS defined on $\R^n$ for $n>1$.
Note that we implicitly require that the interior of the convex hull is non-empty or, equivalently, that the attractor $K$ is not contained in a hyperplane.
The interior of the convex hull of $K$ is always an invariant set.
\end{remark}

Clearly \FTCco{} $\subseteq$ \GFTCco{}, and it is known that GFTC $\subseteq$ WSC, \cite{LN2}.
We are not aware of any example in $\R^d$  of an IFS where the interior of the convex hull of $K$ is non-empty and which satisfies the WSC, but not the \GFTCco{}.

\subsection{Neighbour sets}

The notions of net intervals and neighbour sets, introduced in \cite{F3} and \cite{HHS}, have proven to be very fruitful in the study of IFS satisfying the finite type condition.
Here we extend these notions to an arbitrary IFS in $\R$ where the attractor $K$ is not a singleton or, equivalently, the interior of the convex hull of $K$ is non-empty.

Let $h_{1},\ldots ,h_{s(\alpha )}$ be the collection of distinct elements of the set $\{S_{\sigma }(0),S_{\sigma }(1):\sigma \in \Lambda_{\alpha }\}$ listed in strictly ascending order and let 
\begin{equation*}
    \mathcal{F}_{\alpha }=\{[h_{j},h_{j+1}]:1\leq j<s(\alpha )\text{ and }(h_{j},h_{j+1})\cap K\neq \emptyset \}.
\end{equation*}
Elements of $\mathcal{F}_{\alpha }$ are called \textbf{net intervals of generation }$\alpha $.
For convenience, we write $\mathcal{F} =\bigcup_{\alpha >0}\mathcal{F}_{\alpha }$ to denote the set of all possible net intervals.

Suppose $\Delta \in \mathcal{F}$.
We denote by $T_{\Delta }$ the unique contraction $T_{\Delta }(x)=rx+a$ with $r>0$ such that 
\begin{equation*}
    T_{\Delta }([0,1])=\Delta.
\end{equation*}%
Of course, $r=m(\Delta)$ where $m$ denotes the normalized Lebesgue measure and $a$ is the left endpoint of $\Delta$.

\begin{definition}\label{d:nb}
    We will say that a similarity $T(x)=Lx+a$ is a \textbf{neighbour} of $\Delta \in \mathcal{F}_{\alpha }$ if there exists some $\sigma \in \Lambda_{\alpha }$ such that $S_{\sigma }([0,1])\supseteq \Delta $ and $T=T_{\Delta }^{-1}\circ S_{\sigma }$.
    In this case, we also say that $S_{\sigma }$ \textbf{generates} the neighbour $T$.
    The \textbf{neighbour set} of $\Delta $ is the maximal set 
    \begin{equation*}
        V_{\alpha }(\Delta )=\{T_{1},\ldots ,T_{m}\}
    \end{equation*}
    where each $T_{i}=T_{\Delta }^{-1}\circ S_{\sigma_{i}}$ is a distinct neighbour of $\Delta $.
    When the generation of $\Delta$ is implicit, we will simply write $V(\Delta )$.
\end{definition}

Fix a net interval $\Delta =[a_{0},b_{0}]\in \mathcal{F}_{\alpha }$ and $\sigma \in \Lambda_{\alpha }$ with $S_{\sigma }([0,1])\supseteq \Delta $.
Put $L=r_{\sigma }/m(\Delta )$ and $a=(S_{\sigma }(0)-a_{0})/m(\Delta )$.
Then the neighbour $T$ generated by $S_{\sigma }$ is given by 
\begin{equation*}
    T=T_{\Delta }^{-1}\circ S_{\sigma }(x)=\frac{S_{\sigma }(x)-a_{0}}{b_{0}-a_{0}}=\frac{r_{\sigma }x+S_{\sigma }(0)-a_{0}}{m(\Delta )}=Lx+a.
\end{equation*}
The neighbours $T$ of $\Delta $ are clearly in one-to-one correspondence with the pairs $(a,L)$.
Thus this definition of a neighbour is a slightly modified version of the one defined in \cite{HHS}, where instead of normalizing by a value $\alpha =r_{\min }^{n}$ we normalize by $m(\Delta)$.

\subsection{Characterizing the weak separation condition by neighbour sets}

A characterization of the weak separation condition can be easily expressed in terms of these neighbour sets.
This forward implication of this proposition captures the intuition in $\R^d$ that, under the weak separation condition (when the attractor of the IFS is not contained in a hyperplane), the choice of $N$ in Definition~\ref{d:WSC} can be made independent of the initial choice of some fixed $x_0$.
This observation is used, for example, in the proof of \cite[Thm. 2.1]{FHOR} that IFSs which satisfy the weak separation condition are Ahlfors regular.

Recall that we assume that our IFS is in $\R$ and that the convex hull of the invariant compact set is $[0,1]$.
We write $\#X$ for the cardinality of the set $X$.
\begin{proposition}\label{l:wsp-l}
    An IFS has the weak separation condition if and only if $\sup_{\Delta \in \mathcal{F}}\#V(\Delta )<\infty$.
\end{proposition}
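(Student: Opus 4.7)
The plan is to prove both directions of the equivalence.

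For $(\Rightarrow)$, I would use the equivalent form of the WSC (valid for all $x \in \R$): there exists $N(x)$ such that any closed ball of radius $\alpha$ contains at most $N(x)$ distinct points from $\{S_\sigma(x) : \sigma \in \Lambda_\alpha\}$. Fix $\Delta = [a_0, b_0] \in \mathcal{F}_\alpha$; each neighbour $T = T_\Delta^{-1} \circ S_\sigma$ is uniquely determined by the pair $(S_\sigma(0), S_\sigma(1))$, since two points determine an affine map. As $S_\sigma([0,1]) \supseteq \Delta$ has length $|r_\sigma| < \alpha$, both endpoints lie within the closed ball of radius $\alpha$ around $a_0$, so applying the WSC at $x = 0$ and $x = 1$ yields $\#V(\Delta) \leq N(0) \cdot N(1)$ uniformly in $\Delta$ and $\alpha$.

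For $(\Leftarrow)$, set $M := \sup_\Delta \#V(\Delta) < \infty$. Fix $x \in [0,1]$, $\alpha > 0$, $y \in \R$, and let $\mathcal{J}$ be the collection of \emph{distinct} intervals $\{S_\sigma([0,1]) : \sigma \in \Lambda_\alpha,\; |S_\sigma(x) - y| \leq \alpha\}$. Since $|r_\sigma| < \alpha$ and $S_\sigma(x) \in S_\sigma([0,1])$, every $J \in \mathcal{J}$ lies in the ball of radius $2\alpha$ about $y$, while $|r_\sigma| \geq r_{\min} \alpha$ (from $\sigma \in \Lambda_\alpha$) forces $m(J) \geq r_{\min} \alpha$. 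The crux is to establish the uniform multiplicity bound $\mu(z) := \#\{J \in \mathcal{J} : z \in J\} \leq 2M$ for almost every $z$. Integrating this bound over the ball of radius $2\alpha$ about $y$ then gives $\sum_{J \in \mathcal{J}} m(J) \leq 8M\alpha$, so $\#\mathcal{J} \leq 8M / r_{\min}$. Since each interval in $\mathcal{J}$ arises from at most two distinct similarities (positive and negative orientations), the number of distinct values $S_\sigma(x)$ in the ball is at most $16M / r_{\min}$. Applying this bound to $x = S_\tau(0) \in K \subseteq [0,1]$ for arbitrary $\tau$ yields the WSC.

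The main obstacle is establishing the multiplicity bound $\mu(z) \leq 2M$. For $z$ in the interior of a net interval $\Delta$, the fact that every $S_\sigma([0,1])$ has endpoints in the partition $\{h_j\}$ implies that $z \in S_\sigma([0,1])$ holds if and only if $S_\sigma([0,1]) \supseteq \Delta$, giving $\mu(z) \leq \#V(\Delta) \leq M$ directly. For $z$ in a gap subinterval $G = (h_j, h_{j+1})$ with $G \cap K = \emptyset$, I would argue that every $J \supseteq G$ must contain at least one net interval: since $S_\sigma(K)$ is uncountable (because $K$ is non-singleton) and has convex hull $J$, it cannot be confined to the finitely many partition points inside $J$. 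Because $G$ is itself a gap and $J$'s endpoints are partition points, any net interval inside $J$ lies strictly to the left or the right of $G$, forcing $J$ to contain the nearest net interval $\Delta_L$ to the left of $G$ or $\Delta_R$ to the right. Therefore $\mu(z) \leq \#V(\Delta_L) + \#V(\Delta_R) \leq 2M$, which completes the argument.
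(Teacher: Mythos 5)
Your proof is correct and follows essentially the same two-step strategy as the paper: the forward direction bounds $\#V(\Delta)$ by counting images of $0$ and $1$ in balls of radius $\alpha$, and the converse uses a measure/packing argument on the intervals $S_{\sigma}([0,1])$ inside a ball of radius $2\alpha$. The one place you add substance is the multiplicity bound: the paper simply asserts that each point lies in the interior of at most $M$ of these intervals, whereas you justify a bound of $2M$ by treating separately points interior to net intervals and points in gaps (using that each $S_{\sigma}([0,1])$ must contain a net interval, hence the nearest one on some side of the gap) --- a worthwhile clarification, since the paper's constant $M$ is not obviously correct for gap points, and the final uniform bound is all that matters.
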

\begin{proof}
    Suppose the IFS $\mathcal{S}=\{S_{j}\}$ has the weak separation condition.
    Find the bounds $N_{1},N_{2}$ from Definition~\ref{d:WSC} (of the WSC) with the points $x_{0}=0,1$ and $\tau =\Id$.
    Let $[a_{0},b_{0}]=\Delta \in \mathcal{F}_{\alpha }$ be an arbitrary net interval and define 
    \begin{equation*}
        E=\{S_{\sigma }(0),S_{\sigma }(1):T_{\Delta }^{-1}\circ S_{\sigma }\in V(\Delta )\}.
    \end{equation*}
    Note that $\#V(\Delta )\leq \#E(\#E-1)$ since to any neighbour of $\Delta$ there must correspond two distinct points in $E$.
    Thus it suffices to show that $\#E$ is bounded.

    Consider the closed sets $I_{0}=[b_{0},b_{0}+\alpha ]$ and $I_{1}=[a_{0}-\alpha ,a_{0}]$, set $I=I_{0}\cup I_{1}$, and note that $E\subseteq I$.
    By the definition of the weak separation condition, $I$ contains at most $2N_{1}$ distinct points of the form $S_{\sigma }(0)$ for $\sigma \in \Lambda_{\alpha }$, and at most $2N_{2}$ distinct points of the form $S_{\sigma }(1)$ for $\sigma \in \Lambda_{\alpha }$.
    Thus $\#E\leq 2(N_{1}+N_{2})$.

    Conversely, suppose $\sup_{\Delta \in \mathcal{F}}\#V(\Delta )=M<\infty $.
    Fix an arbitrary generation $\alpha $ and any closed ball $I$ with radius $\alpha $.
    Fix $x_{0}\in \lbrack 0,1]$. Let $J$ be a closed ball with radius $2\alpha $ and the same center as $I$.
    Assume $x=S_{\sigma }(x_{0})\in I\subseteq J$ for some $\sigma \in \Lambda_{\alpha }$.

    If $I$ contains $N$ distinct points of the form $S_{\sigma }(x_{0})$ for $\sigma \in \Lambda_{\alpha }$, then $\#Y\geq N$ where 
    \begin{equation*}
        Y=\{S_{\sigma }:S_{\sigma }(x_{0})\in I,\sigma \in \Lambda_{\alpha}\}.
    \end{equation*}%
    Thus $\sum_{f\in Y}m(f([0,1])\geq N\alpha r_{\min }$.

    On the other hand, since $\sup_{\Delta \in \mathcal{F}}\#V(\Delta )=M$, any point $x\in J$ can be contained in the interior of at most $M$ intervals $f([0,1])$ with $f\in Y$.
    Hence $\sum_{f\in Y}m(f([0,1])\leq Mm(J)=M4\alpha$.
    Combining these two inequalities yields 
    \begin{equation*}
        N\alpha r_{\min }\leq M4\alpha ,\text{ or equivalently, }N\leq \frac{4M}{r_{\min }}.
    \end{equation*}%
    This gives a uniform bound for $N$ and hence the IFS satisfies the weak separation condition.
\end{proof}

\section{Finite neighbour condition\label{S:WFTC}}

IFSs that have the finite type property admit only finitely many neighbour sets, \cite{F3}.
(Strictly speaking, neighbour sets were only defined for the generations $r_{\min }^{k}$ for $k\in \mathbb{N}$, but, as shown in \cite[Prop. 5.4]{DLN}, this makes no difference.)
This was a crucial feature in carrying out the multifractal analysis of self-similar measures associated with IFS of finite type in \cite{F3, F5, HHS}.
Inspired by this, we make the following definition.
\begin{definition}
    We say that an IFS satisfies the \textbf{finite neighbour condition } if there are only finitely many neighbour sets.
\end{definition}

We emphasize that there is no requirement here that the contractions be logarithmically commensurate, as is implicitly required with finite type property.
\begin{example}\label{ex:1}
    Consider the IFS given by the four maps
    \begin{align*}
        S_1(x) &= \frac{1}{3}x & S_2(x) &= \frac{1}{4}x+\frac{1}{4}\\
        S_3(x) &= \frac{1}{4}x+\frac{1}{2} & S_4(x) &= \frac{1}{4}x+\frac{3}{4}
    \end{align*}
    which has invariant set $K=[0,1]$.
    This example is very similar to \cite[Example 2.8.]{LN2}.
    Following similar methods, it is straightforward to show that there are five possible neighbour sets given by
    \begin{align*}
        v_1 &= \{x\mapsto x\} & v_2 &= \{x\mapsto\frac{4}{3}x\}\\
        v_3 &= \{x\mapsto 3x,x\mapsto 4x-3\} &v_4 &= \{x\mapsto \frac{3}{2}x-\frac{1}{2}\}\\
        v_5 &= \{x\mapsto x,x\mapsto 3x\}
    \end{align*}
    so that this IFS satisfies the finite neighbour condition, and hence the weak separation condition.
\end{example}
Many other examples are given in \cite{Ru}.

If there are only finitely many neighbour sets, then $\sup_{\Delta \in \mathcal{F}}\#V(\Delta )<\infty$.
Consequently, Proposition~\ref{l:wsp-l} immediately gives

\begin{corollary}\label{c:fnc-wsc}
Any IFS satisfying the finite neighbour condition has the weak separation condition.
\end{corollary}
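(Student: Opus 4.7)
The plan is to observe that the finite neighbour condition, when combined with the fact that each individual neighbour set is itself finite, immediately yields a uniform bound on $\#V(\Delta)$ over all $\Delta \in \mathcal{F}$, and then to invoke Proposition~\ref{l:wsp-l} as a black box to conclude the weak separation condition.

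In more detail: I would first remark that for any fixed $\Delta \in \mathcal{F}_\alpha$, the neighbour set $V(\Delta)$ is by definition $\{T_\Delta^{-1}\circ S_\sigma : \sigma \in \Lambda_\alpha,\ S_\sigma([0,1])\supseteq \Delta\}$, and since $\Lambda_\alpha$ is a finite subset of $\Sigma^\ast$ (the length of any $\sigma \in \Lambda_\alpha$ is bounded because $|r_\sigma|<\alpha$), the set $V(\Delta)$ is finite. Under the hypothesis of the finite neighbour condition, the collection $\{V(\Delta):\Delta\in\mathcal{F}\}$ has only finitely many distinct members, each of finite cardinality, so the maximum of their sizes is finite; that is, $\sup_{\Delta\in\mathcal{F}}\#V(\Delta)<\infty$. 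Applying the already-established Proposition~\ref{l:wsp-l}, which asserts equivalence of the WSC with exactly this uniform bound, gives the conclusion.

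There is no real obstacle here: the corollary is essentially a tautological consequence of Proposition~\ref{l:wsp-l} once one observes the (trivial) fact that a finite collection of finite sets has bounded cardinality. The only thing that has to be pointed out explicitly is the finiteness of each individual $V(\Delta)$, which follows from the finiteness of $\Lambda_\alpha$; this is not stated as a separate lemma in the excerpt but is immediate from the definitions.
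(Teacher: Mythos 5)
Your proposal is correct and follows exactly the paper's argument: the finite neighbour condition gives $\sup_{\Delta\in\mathcal{F}}\#V(\Delta)<\infty$, and Proposition~\ref{l:wsp-l} then yields the weak separation condition. The only difference is that you spell out the (true and easy) finiteness of each individual $V(\Delta)$ via the finiteness of $\Lambda_\alpha$, which the paper leaves implicit.
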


Next, we will prove that finite neighbour condition and \GFTCco{} coincide.
We begin with a general construction.
Let $\Gamma $ be any finite set of similarities on $\mathbb{R}$ and define 
\begin{equation*}
    \mathcal{N}(\Gamma ):=\left\{ x\mapsto \frac{f_{3}(x)-f_{2}(v_{2})}{f_{1}(v_{1})-f_{2}(v_{2})}:v_{i}\in \{0,1\},f_{i}\in \Gamma ,f_{1}(v_{1})-f_{2}(v_{2})\neq 0\right\}.
\end{equation*}
Clearly $\mathcal{N}(\Gamma)$ is a finite set since $\Gamma $ is finite.

The motivation behind this construction is the following: suppose we are given an arbitrary net interval $\Delta =[f(u),g(v)]$, where $f,g\in \Gamma$ and $u,v\in \{0,1\}$.
Suppose that $T$ is a neighbour of $\Delta$ generated by $S_{\sigma }\in \Gamma $.
Then 
\begin{equation*}
    T(x)=T_{\Delta }^{-1}\circ S_{\sigma }(x)=\frac{S_{\sigma }(x)-f(u)}{g(v)-f(u)}\in \mathcal{N}(\Gamma ).
\end{equation*}
Notice that if $S$ is any similarity and we set $S(\Gamma )=\{S\circ f:f\in \Gamma \},$ then one can easily check that $\mathcal{N}(S(\Gamma ))=\mathcal{N}(\Gamma)$.

\begin{theorem}\label{t:wft-fs}
    The IFS $\mathcal{S}$ satisfies the finite neighbour condition if and only if it satisfies the convex generalized finite type condition.
\end{theorem}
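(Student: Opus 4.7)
I would prove the two implications separately, with the reverse direction following cleanly from the invariance $\mathcal{N}(S\Gamma) = \mathcal{N}(\Gamma)$ and the forward direction requiring a case split on the geometry of the overlap. For \GFTCco{} $\Rightarrow$ FNC, set $\Gamma = \{\Id\} \cup \mathcal{E}_{\mathcal{S}}((0,1))$, which is finite by hypothesis. For each net interval $\Delta \in \mathcal{F}_\alpha$, pick a canonical generator $\sigma = \sigma^{(\Delta)} \in \Lambda_\alpha$ with $S_\sigma([0,1]) \supseteq \Delta$; existence follows because $\Delta^\circ \cap K \neq \emptyset$ while the endpoints $S_\rho(0), S_\rho(1)$ of each $\rho \in \Lambda_\alpha$ are among the $h_j$'s and so cannot sit strictly between consecutive ones. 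Each endpoint of $\Delta$ then arises as $f(v)$ with $f \in S_\sigma \Gamma$ and $v \in \{0,1\}$: writing the endpoint as $S_{\rho_L}(v)$, if it lies in the interior of $S_\sigma([0,1])$ then $S_{\rho_L}((0,1))$ and $S_\sigma((0,1))$ overlap near it, so $S_\sigma^{-1} \circ S_{\rho_L} \in \mathcal{E}_{\mathcal{S}}((0,1))$ and $S_{\rho_L} \in S_\sigma \Gamma$; if the endpoint is on the boundary of $S_\sigma([0,1])$, it equals $S_\sigma(v)$ directly. The same dichotomy shows every generator of a neighbour of $\Delta$ lies in $S_\sigma \Gamma$. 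Thus every neighbour of $\Delta$ has the form specified in $\mathcal{N}(S_\sigma \Gamma) = \mathcal{N}(\Gamma)$, so $V(\Delta) \subseteq \mathcal{N}(\Gamma)$, a fixed finite set.

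For FNC $\Rightarrow$ \GFTCco{}, let $\Gamma_0$ be the finite union of all neighbour sets. Given $f = S_\sigma^{-1} \circ S_\tau \in \mathcal{E}_{\mathcal{S}}((0,1))$ with $\sigma, \tau \in \Lambda_\alpha$, the easy case is when some $\Delta \in \mathcal{F}_\alpha$ lies in $S_\sigma([0,1]) \cap S_\tau([0,1])$: then both $\sigma$ and $\tau$ generate neighbours of $\Delta$, giving $f = (T_\Delta^{-1} \circ S_\sigma)^{-1} \circ (T_\Delta^{-1} \circ S_\tau) \in \Gamma_0^{-1} \circ \Gamma_0$, a finite set.

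The main obstacle is the case with no such common $\Delta$. I would first show the overlap must then equal the closure $[p,q]$ of some gap of $K$: by the canonical-generator argument above, any net interval meeting the open overlap would sit inside it, so the open overlap avoids $K$ and is contained in a gap; since $S_\sigma(K) \subseteq K$ is uncountable with convex hull $S_\sigma([0,1])$, the set $S_\sigma([0,1])$ cannot lie entirely inside the gap, forcing it to straddle one of its boundary points, and symmetrically for $S_\tau$ on the opposite side. Because $K$ has no isolated points (any non-singleton self-similar set is perfect), there exist net intervals $\Delta_L \subseteq S_\sigma([0,1])$ with right endpoint $p$ and $\Delta_R \subseteq S_\tau([0,1])$ with left endpoint $q$. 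Setting $T_A := T_{\Delta_L}^{-1} \circ S_\sigma \in \Gamma_0$ and $T_B := T_{\Delta_R}^{-1} \circ S_\tau \in \Gamma_0$, a direct computation (in the orientation-preserving case $r_\sigma, r_\tau > 0$, writing $e := q-p$) gives
\[ f(x) = \frac{r_\tau}{r_\sigma}\, x + \left(1 - \frac{e}{r_\sigma}\right), \]
and the affine coefficients of $T_A$ and $T_B$ yield $e/r_\sigma$ and $e/r_\tau$ respectively; these also determine the ratio $r_\tau/r_\sigma$. Thus $f$ is a function of $(T_A, T_B)$ together with the signs of $r_\sigma, r_\tau$ --- a finite family of data --- so only finitely many $f$ arise.
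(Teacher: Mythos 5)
Your proposal is correct, and both halves ultimately run along the same lines as the paper: the direction \GFTCco{} $\Rightarrow$ finite neighbour condition via the finiteness and similarity-invariance of $\mathcal{N}(\Gamma)$ (your canonical-generator bookkeeping is a careful rendering of the paper's ``it can always be arranged'' step, with the boundary case absorbed by putting $\Id$ into $\Gamma$), and the converse via a common net interval $\Delta\subseteq S_{\sigma}([0,1])\cap S_{\tau}([0,1])$ giving $S_{\sigma}^{-1}\circ S_{\tau}=(T_{\Delta}^{-1}\circ S_{\sigma})^{-1}\circ(T_{\Delta}^{-1}\circ S_{\tau})$. The one substantive difference is that the case you call the ``main obstacle'' is vacuous; the paper simply asserts that a common net interval always exists. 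Indeed, since $K$ is not a singleton it is perfect, and $0,1\in K$, so $K$ accumulates at each point $S_{\tau}(u)$, $u\in\{0,1\}$, from inside $S_{\tau}([0,1])$. If $S_{\sigma}((0,1))\cap S_{\tau}((0,1))=(c,b)\neq\emptyset$ with, say, $c$ the left endpoint of $S_{\tau}([0,1])$, then $(c,c+\varepsilon)\cap K\neq\emptyset$ for every $\varepsilon>0$, so the open overlap meets $K$ and hence contains the interior of some $\Delta\in\mathcal{F}_{\alpha}$ with $\Delta\subseteq[c,b]$ (its endpoints are consecutive $h_{j}$'s, and $c,b$ are themselves $h_{j}$'s); the nested case $S_{\tau}([0,1])\subseteq S_{\sigma}([0,1])$ is handled the same way since $S_{\tau}(K)$ is more than its two endpoints. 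In particular the open overlap can never lie inside a gap of $K$ --- your own setup in that case already contradicts perfectness of $K$ at the gap endpoint $p$, which is an endpoint of one of the two images --- so the gap-straddling computation with $\Delta_{L}$, $\Delta_{R}$, while internally consistent, is never invoked.
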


\begin{proof}
    As in (\ref{ESNotation}), set
    \begin{equation*}
        \mathcal{E}:=\mathcal{E}_{\mathcal{S}}((0,1))=\bigcup\limits_{\alpha>0}\{S_{\sigma }^{-1}\circ S_{\tau }:\sigma ,\tau \in \Lambda_{\alpha},S_{\sigma }((0,1))\cap S_{\tau }((0,1))\neq \emptyset \}.
    \end{equation*}
    According to the definitions we have given of \GFTCco{} and the finite neighbour condition, the theorem is equivalent to the statement that $\mathcal{S}$ has finitely many neighbour sets if and only if $\mathcal{E}$ is finite.

    First, suppose $\mathcal{S}$ has only finitely many neighbour sets.
    Let $\sigma ,\tau \in \Lambda_{\alpha }$ be arbitrary and suppose $I=S_{\sigma}((0,1))\cap S_{\tau }((0,1))\neq \emptyset $.
    Then there exists some net interval $\Delta \in \mathcal{F}_{\alpha}$ contained in $I$, so that $S_{\sigma }$ and $S_{\tau }$ generate neighbours of $\Delta $.
    In particular, $T_{\Delta }^{-1}\circ S_{\sigma }$ and $T_{\Delta}^{-1}\circ S_{\tau }$ must be two of the finitely many neighbours.
    Hence 
    \begin{equation*}
        S_{\sigma }^{-1}\circ S_{\tau }=S_{\sigma }^{-1}\circ T_{\Delta }\circ T_{\Delta }^{-1}\circ S_{\tau }=(T_{\Delta }^{-1}\circ S_{\sigma})^{-1}\circ (T_{\Delta }^{-1}\circ S_{\tau })
    \end{equation*}
    can only take finitely many values, so $\mathcal{E}$ is a finite set.

    Conversely, suppose $\mathcal{E}$ is a finite set.
    Let $\Delta =[a,b]\in \mathcal{F}_{\alpha }$ be an arbitrary net interval.
    Let $S_{\sigma_{0}}$ generate a neighbour $T$ of $\Delta $ and $\sigma_{1},\sigma_{2}$ be such that $a\in \{S_{\sigma_{1}}(0),S_{\sigma_{1}}(1)\}$ and $b\in \{S_{\sigma_{2}}(0),S_{\sigma_{2}}(1)\}$.
    It can always be arranged for $S_{\sigma_{0}}((0,1))$ to intersect non-trivially with both $S_{\sigma_{1}}((0,1))$ and $S_{\sigma_{2}}((0,1))$.
    But then, by the invariance of $\mathcal{N}$ under composition by similarities, we have 
    \begin{equation*}
        T\in \mathcal{N}(\{S_{\sigma_{0}},S_{\sigma_{1}},S_{\sigma_{2}}\})=\mathcal{N}(\{S_{\sigma_{0}}^{-1}\circ S_{\sigma_{0}},S_{\sigma_{0}}^{-1}\circ S_{\sigma_{1}},S_{\sigma_{0}}^{-1}\circ S_{\sigma_{2}}\})\subseteq \mathcal{N}(\mathcal{E}).
    \end{equation*}
    Since $\mathcal{E}$ is a finite set, there are only finitely many neighbour sets.
\end{proof}

\begin{corollary}\label{WFTC=GFTC}
    An equicontractive IFS $\mathcal{S}$ with contraction factor $\rho >0$ has the finite neighbour condition if and only if there is a finite set $\Gamma $ such that for each $n\in \mathbb{N}$ and $\sigma ,\tau \in \Lambda_{n}$ we have either
    \begin{equation}\label{FTCFeng}
        \rho^{-n}\left\vert S_{\sigma }(0)-S_{\tau }(0)\right\vert \geq 1\text{ or } \rho^{-n}\left\vert S_{\sigma }(0)-S_{\tau }(0)\right\vert \in \Gamma .
    \end{equation}
\end{corollary}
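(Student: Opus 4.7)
The plan is to reduce the claim to Theorem~\ref{t:wft-fs} and then unpack what $\mathcal{E}:=\mathcal{E}_{\mathcal{S}}((0,1))$ looks like in the equicontractive setting. By that theorem, the finite neighbour condition is equivalent to $\mathcal{E}$ being finite, so it suffices to show that, for equicontractive $\mathcal{S}$, finiteness of $\mathcal{E}$ is equivalent to condition~\eqref{FTCFeng}.

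The simplification I would exploit is that in the equicontractive case all words in a given generation have a common contraction factor, so $S_\sigma^{-1}\circ S_\tau$ is a pure translation, namely $S_\sigma^{-1}\circ S_\tau(x)=x+\rho^{-n}(S_\tau(0)-S_\sigma(0))$. Moreover, since $S_\sigma((0,1))$ and $S_\tau((0,1))$ are open intervals of common length $\rho^n$, they intersect exactly when $\rho^{-n}|S_\sigma(0)-S_\tau(0)|<1$. Combining these two observations, $\mathcal{E}$ is finite if and only if the set of displacements $\rho^{-n}(S_\tau(0)-S_\sigma(0))$ with $\rho^{-n}|S_\sigma(0)-S_\tau(0)|<1$ is finite. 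Since passing from signed displacements to absolute values is at most two-to-one, this is in turn equivalent to finiteness of the set of values $\rho^{-n}|S_\sigma(0)-S_\tau(0)|$ that are less than $1$, which is exactly what~\eqref{FTCFeng} encodes (forward direction: take $\Gamma$ to be this finite set of sub-unit values; reverse direction: every sub-unit value sits inside the finite $\Gamma$).

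Given Theorem~\ref{t:wft-fs}, the argument is essentially bookkeeping and I do not anticipate a genuine obstacle. The one care point will be the indexing convention for $\Lambda_{\rho^n}$: under the convention chosen in the paper, $\Lambda_{\rho^n}$ consists of words of length $n+1$ rather than $n$, so the natural scaling factor coming out of the translation computation is $\rho^{-(n+1)}$ rather than $\rho^{-n}$. This constant factor of $\rho$ is absorbed by rescaling $\Gamma$ (equivalently, by a one-step re-indexing of $n$), so the equivalence with~\eqref{FTCFeng} is unaffected.
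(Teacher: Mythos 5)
Your proposal is correct and follows essentially the same route as the paper: both reduce to Theorem~\ref{t:wft-fs}, observe that in the equicontractive case the maps $S_{\sigma}^{-1}\circ S_{\tau}$ for $\sigma,\tau$ in a common generation are pure translations whose displacement is (up to sign and a fixed power of $\rho$ from the indexing convention) the quantity $\rho^{-n}|S_{\sigma}(0)-S_{\tau}(0)|$, and note that the intersection condition $S_{\sigma}((0,1))\cap S_{\tau}((0,1))\neq\emptyset$ is exactly the condition that this quantity is less than $1$. Your remark about the two-to-one passage from signed to unsigned displacements matches the paper's containment $\Gamma\subseteq\{|d|:rx+d\in\mathcal{E}_{\mathcal{S}}((0,1))\}\subseteq\{\pm d:d\in\Gamma\}$, and your care point about $\Lambda_{\rho^{n}}$ consisting of words of length $n+1$ is a legitimate observation that the paper glosses over but which, as you say, only rescales $\Gamma$ by a harmless constant.
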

\begin{proof}
    If $\sigma ,\tau \in \Lambda_{\rho^{n}}$, then $S_{\sigma }((0,1))\cap S_{\tau }((0,1))=\emptyset $ if and only if we have $\rho^{-n}\left\vert S_{\sigma }(0)-S_{\tau }(0)\right\vert \geq 1$.
    Note that if $S_{\sigma}^{-1}\circ S_{\tau }(x)=x+d_{1}$ (we recall that the contraction factors of $S_{\sigma }$ and $S_{\tau }$ are necessarily equal) and $\rho^{-n}\left\vert S_{\sigma }(0)-S_{\tau }(0)\right\vert =d_{2}\in \Gamma$, then $\left\vert d_{1}\right\vert =d_{2}$.
    Thus
    \begin{equation*}
        \Gamma \subseteq \{\left\vert d\right\vert :rx+d\in \mathcal{E}_{\mathcal{S}}((0,1))\}\subseteq \{\pm d:d\in \Gamma \}.
    \end{equation*}

    Hence $\mathcal{E}_{\mathcal{S}}((0,1))$ is finite (equivalently, $\mathcal{S}$ has the finite neighbour condition) if and only if $\Gamma $ is finite.
    And, of course, the properties \GFTCco{} and finite neighbour condition coincide.
\end{proof}

\begin{remark}
    More generally, it is immediate from the theorem that the finite neighbour condition is equivalent to \FTCco{} if the contraction factors are commensurate.
\end{remark}
Any IFS that has the \FTCco{} property also has the property that there exists some $c>0$ such that for any $\alpha >0$ and $\Delta \in \mathcal{F}_{\alpha}$, it is the case that $m(\Delta )\geq c\alpha $ (\cite{F3, HHS}).
Since the images of $0$ and $1$ under the maps $S_{\sigma }$ for $\sigma \in $ $\Lambda_{\alpha }$ are the endpoints of the net intervals in $\mathcal{F}_{\alpha }$, in the case that $K=[0,1]$ this property equivalent to saying that there exists some $c>0$ such that for any $0<\alpha \leq 1$, words $\sigma ,\tau \in \Lambda_{\alpha }$ and $z,w\in \{0,1\}$, either 
\begin{equation}\label{BSP}
    S_{\sigma }(z)=S_{\tau }(w)\text{ or }\left\vert S_{\sigma }(z)-S_{\tau}(w)\right\vert \geq c\alpha.
\end{equation}
In fact, (\ref{BSP}) holds, even without the assumption that $K=[0,1]$, for IFS satisfying the finite neighbour condition.

\begin{theorem}
    Suppose $\mathcal{S}$ has the finite neighbour condition.
    Then there exists some $c>0$ such that for any $0<\alpha \leq 1$, $\sigma ,\tau \in \Lambda_{\alpha }$, and $u,v\in \{0,1\}$, 
    \begin{equation*}
        \text{either}\quad S_{\sigma }(u)=S_{\tau }(v)\quad \text{or}\quad |S_{\sigma }(u)-S_{\tau }(v)|\geq c\alpha.
    \end{equation*}
\end{theorem}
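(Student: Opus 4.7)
The plan is to apply Theorem~\ref{t:wft-fs}, by which the finite neighbour condition is equivalent to \GFTCco{}; that is, $\mathcal{E} := \mathcal{E}_{\mathcal{S}}((0,1))$ is a finite set of similarities. From the finiteness of $\mathcal{E}$, the positive quantity
\[
D := \min\bigl\{|g_1(u) - g_2(v)| : g_1, g_2 \in \mathcal{E}\cup\{\Id\},\ u,v\in\{0,1\},\ g_1(u)\neq g_2(v)\bigr\}
\]
exists, and I aim to establish the theorem with $c$ proportional to $r_{\min}D$.

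The argument splits according to whether $S_\sigma((0,1))$ and $S_\tau((0,1))$ intersect. If they do, then $g := S_\sigma^{-1}\circ S_\tau \in \mathcal{E}$ and $S_\tau(v) = S_\sigma(g(v))$, giving
\[
|S_\sigma(u) - S_\tau(v)| = |r_\sigma|\cdot|u - g(v)| \geq r_{\min}\alpha\cdot D
\]
because $S_\sigma(u)\neq S_\tau(v)$ forces $u\neq g(v)$. When the open cylinders are disjoint, $S_\sigma([0,1])$ and $S_\tau([0,1])$ lie on opposite sides, meeting in at most one point; if $S_\sigma(u)$ or $S_\tau(v)$ is the endpoint of its cylinder furthest from the opposing cylinder, the distance contains a full cylinder length $\geq r_{\min}\alpha$, and we are done. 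The remaining, hardest subcase is when both $S_\sigma(u)$ and $S_\tau(v)$ are the innermost endpoints, so that the distance equals the gap between the two closed cylinders.

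The main obstacle is bounding this inter-cylinder gap from below by a constant multiple of $\alpha$. My strategy is to locate a bridging word $\omega \in \Lambda_\alpha$ such that $S_\omega((0,1))$ meets both $S_\sigma((0,1))$ and $S_\tau((0,1))$; for such an $\omega$, both $S_\omega^{-1}\circ S_\sigma$ and $S_\omega^{-1}\circ S_\tau$ lie in $\mathcal{E}$, and the overlap-case estimate applies with $\omega$ as the common container to yield $|S_\sigma(u) - S_\tau(v)| \geq r_{\min}\alpha\cdot D$. Such an $\omega$ is easily produced if the open gap meets $K$ at any point, since the $\Lambda_\alpha$-cylinder containing that point must extend past both endpoints of the gap. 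The remaining sub-sub-case---that the open gap is itself a connected component of $[0,1]\setminus K$---is the most delicate: by self-similarity every such component is the image $S_\nu(G^*_{\mathrm{prim}})$ of one of the finitely many primary first-step gaps, and its endpoints admit alternative representations $S_{\nu j_L}(w_L)$ and $S_{\nu j_R}(w_R)$. Combining these representations with the finite neighbour condition, which controls the ways a single $h$-point may be multiply represented, forces $|r_\nu|$ to be comparable to $\alpha$; whence the gap length $|r_\nu|\cdot|G^*_{\mathrm{prim}}|$ is itself comparable to $\alpha$. This final step is where the strength of the finite neighbour condition beyond the weak separation condition alone becomes essential.
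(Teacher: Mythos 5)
Your overall plan---reduce to the finiteness of $\mathcal{E}:=\mathcal{E}_{\mathcal{S}}((0,1))$ via Theorem~\ref{t:wft-fs} and take a minimum separation over a finite family of similarities---is the same as the paper's, and your two easy cases (overlapping open cylinders, and a far endpoint) are correct. The problems are in the two hard subcases, which is where the content of the theorem lives. In the subcase where the gap $(p,q)$ between the cylinders meets $K$, the assertion that the $\Lambda_{\alpha}$-cylinder through such a point ``must extend past both endpoints of the gap'' is false: that cylinder has length less than $\alpha$, so it can sit entirely inside $(p,q)$ when the gap is long, and even under a contradiction hypothesis $q-p<r_{\min}\alpha$ it is only forced to protrude past \emph{one} endpoint; since $K$ may be disconnected you also cannot chain overlapping cylinders across the gap. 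The step is repairable (a cylinder protruding past $q$, say, has an endpoint lying in $[p,q)$ which your overlap estimate separates from $q$ by $r_{\min}D\alpha$), or one can argue as the paper does: $[p,q]$ then contains a net interval $\Delta\in\mathcal{F}_{\alpha}$, and finiteness of the set of all neighbours gives a uniform bound $m(\Delta)\geq c_{1}r_{\min}\alpha$. You never establish any lower bound on net interval lengths, and your proposed constant uses only $\mathcal{E}\cup\{\Id\}$, which is too small a set to capture this.

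The last subcase is where the proposal genuinely breaks. The structural claim that every bounded component of $[0,1]\setminus K$ is $S_{\nu}(G)$ for one of finitely many first-level gaps $G$ is false for overlapping IFS: take $S_{1}(x)=x/4$, $S_{2}(x)=x/4+1/8$, $S_{3}(x)=x/4+3/4$. The only first-level gap is $(3/8,3/4)$, of length $3/8$, yet $(3/32,1/8)$ is a gap of $K$ of length $1/32$, which is not of the form $4^{-n}\cdot 3/8$; its endpoints are $S_{12}(1)$ and $S_{2}(0)$, arising from different branches with no common prefix $\nu$ of the required kind (the gap is an intersection of gap/exterior intervals from distinct branches). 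Moreover, even granting that claim, the step ``the finite neighbour condition forces $|r_{\nu}|$ to be comparable to $\alpha$'' is asserted without argument, and multiplicity of representations of the individual endpoints cannot be the mechanism, since a single point of $K$ can be an endpoint at every scale (e.g.\ $0=S_{1^{n}}(0)$ for all $n$). The paper's route through this case is different and you would need some version of it: choose $\alpha$ maximal so that both points are endpoints of generation $\alpha$, use $(p,q)\cap K=\emptyset$ to force $[p,q]\subseteq S_{\sigma^{-}}([0,1])$ (say), and then either the parent cylinders overlap, placing $S_{\sigma}^{-1}\circ S_{\tau}$ in the finite set $\mathcal{G}=\{g^{-1}\circ f\circ h: f\in\mathcal{E},\ g,h\in\{\Id,S_{1},\ldots,S_{k}\}\}$, or $S_{\tau}(v)$ coincides with an endpoint of the parent cylinder; either way a minimum over the finite set $\mathcal{G}$ finishes the argument.
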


\begin{proof}
    Again, let $\mathcal{E=E}_{\mathcal{S}}((0,1))$.
    This is a finite set since $\mathcal{S}$ has the finite neighbour condition, equivalently \GFTCco{}.
    Let $\mathcal{G}$ denote the finite set 
    \begin{equation*}
        \mathcal{G}=\{g^{-1}\circ f\circ h:f\in \mathcal{E};g,h\in \{\Id,S_{1},\ldots ,S_{k}\}\}
    \end{equation*}%
    Likewise, set $\mathcal{V}=\bigcup_{\Delta \in \mathcal{F}}V(\Delta )$ to denote the set of all neighbours, so that $\mathcal{V}$ is a finite set.
    Then put 
    \begin{align*}
        c_{1}& :=\min \{1/|L|:\{x\mapsto Lx+a\}\in \mathcal{V}\}, \\
        c_{2}& :=\min \{|f(u)-g(v)|:u,v\in \{0,1\},f,g\in \mathcal{G},f(u)\neq g(v)\}.
    \end{align*}
    and let
    \begin{equation*}
        c:=r_{\min }\cdot \min \{c_{1},c_{2}\}.
    \end{equation*}%
    We will see that $c$ satisfies the requirements.

    Let $\sigma ,\tau \in \Lambda_{\alpha }$ and assume $S_{\sigma }(u)<S_{\tau}(v)$.
    Put $I=[S_{\sigma }(u),S_{\tau }(v)]$.

    If $I$ contains a net interval $\Delta \in \mathcal{F}_{\alpha }$, then $\Delta $ has some neighbour generated by a word $\omega$.
    In particular, $m(\Delta )/|r_{\omega }|\geq c_{1}$ by definition of a neighbour, so that $m(I)\geq m(\Delta )\geq \alpha r_{\min }c_{1}\geq \alpha c$.

    Otherwise, there is no net interval contained in $I$, equivalently, $\inte I\cap K=\emptyset$.
    Without loss of generality, we may assume that $\alpha $ is maximal with the property that $S_{\sigma }(u)$ and $S_{\tau}(v) $ are both endpoints of generation $\alpha $.
    Fix $\alpha^{\prime}=\min \{|r_{\sigma^{-}}|,|r_{\tau^{-}}|\}$ and obtain $\sigma^{\prime},\tau^{\prime }\in \Lambda_{\alpha^{\prime }}$, prefixes of $\sigma$ and $\tau$ respectively.
    Note that $(\sigma^{\prime },\tau^{\prime })$ is one of $(\sigma^{-}, \Id)$, $(\Id,\tau^{-})$ or $(\sigma^{-},\tau^{-})$.

    Since $\inte I\cap K=\emptyset $, $I$ contains no endpoints of generation $\alpha^{\prime }$, hence the maximality of $\alpha $ implies (without loss of generality) that we have $\sigma^{\prime }=\sigma^{-}$ and $I\subseteq S_{\sigma^{\prime }}([0,1])$.
    If $S_{\sigma^{\prime}}((0,1))\cap S_{\tau^{\prime }}((0,1))$ is not empty, then $S_{\sigma^{\prime }}^{-1}\circ S_{\tau^{\prime }}\in \mathcal{E}$ and hence $S_{\sigma }^{-1}\circ S_{\tau }\in \mathcal{G}$.
    Thus 
    \begin{equation}\label{BSC}
        |S_{\sigma }(u)-S_{\tau }(v)|=r_{\sigma }|u-S_{\sigma }^{-1}\circ S_{\tau}(v)|\geq c_{1}r_{\min }\alpha \geq c\alpha.
    \end{equation}
    If, instead, $S_{\sigma^{\prime }}((0,1))\cap S_{\tau^{\prime }}((0,1))$ is empty, then since $\inte I\cap K=\emptyset $ we have that $S_{\tau}(v)=S_{\sigma^{\prime }}(z)$ for some $z\in \{0,1\}$.
    Now apply the inequality (\ref{BSC}) with $S_{\sigma^{\prime }}(z)$ in place of $S_{\tau}(v)$, noting that $S_{\sigma }^{-1}\circ S_{\sigma^{\prime }}=S_{j}^{-1}$ for some $j=1,...,k$ and thus belongs to $\mathcal{G}$.
    Again, we deduce that $|S_{\sigma }(u)-S_{\tau }(v)|\geq c\alpha $, as required.
\end{proof}

\begin{remark}
This gives another proof that the finite neighbour condition implies the
weak separation condition.
\end{remark}

\section{Equivalence of the weak separation condition and finite neighbour condition}

In this section we will prove our main result, that the weak separation condition coincides with finite neighbour condition if the self-similar set is the full interval $[0,1]$.
Our technique is motivated by Feng's proof, \cite{F15}, that under the additional assumption of equal, positive contractions such IFS are finite type.
First, we prove a technical result.

\begin{lemma} \label{l:delsep}
    Fix some $\delta>0$.
    There exists some constant $C=C(\delta )>0$ such that for any $\alpha >0$ and $\sigma ,\tau \in \Lambda_{\alpha }$ with $m(S_{\sigma }([0,1])\cap S_{\tau }([0,1]))\geq \delta \alpha $, there is some word $\phi $ with $|r_{\phi }|\geq C$ and a choice of $\psi \in \{\sigma ,\tau \}$ such that $r_{\psi \phi }>0$ and 
    \begin{equation*}
        S_{\psi\phi }([0,1])\subseteq S_{\sigma }([0,1])\cap S_{\tau }([0,1]).
    \end{equation*}
\end{lemma}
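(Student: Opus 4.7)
The plan is to rescale the intersection to unit scale and exploit the section-wide assumption $K=[0,1]$ to locate a cylinder of uniformly large size inside the rescaled intersection. First, I set $I=S_{\sigma}([0,1])\cap S_{\tau}([0,1])$, a closed interval of length $m(I)\geq\delta\alpha$, and fix $\psi=\sigma$ (the roles of $\sigma$ and $\tau$ are symmetric in what follows). Let $J=S_{\psi}^{-1}(I)\subseteq [0,1]$; since $\psi\in\Lambda_{\alpha}$ gives $|r_{\psi}|<\alpha$, we obtain $m(J)=m(I)/|r_{\psi}|>\delta$.

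The geometric core of the argument is to produce a word $\eta$ with $|r_{\eta}|$ bounded below by a constant depending only on $\delta$ and with $S_{\eta}([0,1])\subseteq J$. Set $\beta=\delta/3$ and consider $\Lambda_{\beta}$: every $\eta\in\Lambda_{\beta}$ satisfies $\beta r_{\min}\leq|r_{\eta}|<\beta$, so the cylinder $S_{\eta}([0,1])$ has length strictly less than $\delta/3\leq m(J)/3$. Because $K=[0,1]=\bigcup_{\eta\in\Lambda_{\beta}}S_{\eta}([0,1])$, any point $x$ in the middle third of $J$ (which is non-empty since $m(J)>\delta$) lies in some $S_{\eta}([0,1])$. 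Since this cylinder has length less than $m(J)/3$ and contains a point at distance at least $m(J)/3$ from each endpoint of $J$, the whole cylinder is forced inside $J$.

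The remaining step is the orientation condition $r_{\psi\phi}>0$. If $r_{\eta}$ and $r_{\psi}$ already share a sign, take $\phi=\eta$. Otherwise some generator $r_{j}$ must have sign opposite to $r_{\eta}$: indeed, if every $r_{i}$ had a fixed common sign then the signs of all $r_{\sigma}$'s (and hence of $r_{\eta}$ and $r_{\psi}$) would be forced in a way that rules out the mismatch. Taking $\phi=\eta j$ gives $S_{\phi}([0,1])\subseteq S_{\eta}([0,1])\subseteq J$, the corrected sign $r_{\psi\phi}>0$, and $|r_{\phi}|\geq\beta r_{\min}^{2}$. Unwinding, $S_{\psi\phi}([0,1])=S_{\psi}(S_{\phi}([0,1]))\subseteq S_{\psi}(J)=I$, so the constant $C(\delta)=\delta r_{\min}^{2}/3$ works.

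The only real subtlety is the orientation bookkeeping at the end; the essential geometric content is that $K=[0,1]$ furnishes, at every scale $\beta$, a cover of $[0,1]$ by cylinders of comparable size, which places a large cylinder inside any sufficiently long subinterval of $[0,1]$. I expect the small case analysis for signs of $r_{\eta}$ versus $r_{\psi}$ (depending on whether the contraction ratios $r_{j}$ are all positive, all negative, or mixed) to be the only mildly fiddly part; everything else is immediate.
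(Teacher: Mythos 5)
Your construction is correct \emph{if} one is allowed to assume $K=[0,1]$, but that is not a hypothesis of this lemma: the standing assumption is only that the convex hull of $K$ is $[0,1]$, and the paper explicitly remarks, immediately after this proof, that the assumption $K=[0,1]$ is needed only for the \emph{next} lemma (Lemma~\ref{l:bsep}). Your argument uses $K=[0,1]$ essentially, in the step ``$[0,1]=\bigcup_{\eta\in\Lambda_{\beta}}S_{\eta}([0,1])$, so some point of the middle third of $J$ lies in a generation-$\beta$ cylinder'': if $K$ is, say, a Cantor set, the middle third of $J=S_{\psi}^{-1}(I)$ may lie entirely in a gap of $\bigcup_{\eta\in\Lambda_{\beta}}S_{\eta}([0,1])$, and no cylinder need sit inside $J$ near its center. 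The paper's proof avoids this by aiming at a point of $K$ that is guaranteed to lie in the intersection: the endpoint $d$ of $I=[c,d]$ equals $S_{\psi}(v)$ for some $v\in\{0,1\}$ and $\psi\in\{\sigma,\tau\}$, and since $0,1\in K$ one can recursively append letters $i_{0}$ or $i_{1}$ (with $0\in S_{i_{0}}([0,1])$, $1\in S_{i_{1}}([0,1])$, chosen at each stage according to the current sign) so that the cylinders $S_{\psi\phi_{n}}([0,1])$ always keep $d$ as an endpoint and shrink into $[c,d]$; minimality of the stopping time gives $|r_{\phi_{N}}|\geq\delta r_{\min}$ and hence $C=\delta r_{\min}^{2}$. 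Since Lemma~\ref{l:delsep} is only invoked inside Lemma~\ref{l:bsep}, where $K=[0,1]$ is assumed, your weaker version would still suffice for the main theorem, but it does not prove the lemma as stated.

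A second, smaller slip is in the orientation fix: when $r_{\psi}r_{\eta}<0$ you need a generator $j$ with $r_{j}<0$, i.e.\ with sign opposite to $r_{\psi\eta}$, not opposite to $r_{\eta}$ --- in the case $r_{\eta}<0<r_{\psi}$ your rule selects $r_{j}>0$ and leaves $r_{\psi\eta j}<0$. The existence of such a $j$ follows simply because $r_{\psi\eta}<0$ forces some letter of $\psi\eta$ to have negative ratio; your justification (``a fixed common sign rules out the mismatch'') also fails when all the $r_{i}$ are negative, since then the sign of $r_{\sigma}$ alternates with word length. Both points are repairable, but the reliance on $K=[0,1]$ is a genuine gap relative to the statement.
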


\begin{proof}
    Say $S_{\sigma }([0,1])\cap S_{\tau }([0,1])=[c,d]$ with $d-c\geq \delta\alpha $.
    Without loss of generality, we may assume that $d=S_{\sigma }(v)$ for some $v\in \{0,1\}$, and we put $\psi =\sigma $.
    Let $i_{0}$ be an index with $0\in S_{i_{0}}([0,1])$ and $i_{1}$ be chosen so that $1\in S_{i_{1}}([0,1])$.
    Set $C=C(\delta ):=\delta r_{\min }^{2}$.

    We recursively construct $\phi $ as follows:

    \begin{itemize}
        \item If $r_{\psi }>0$, set $\phi_{1}=(i_{1})$, while if $r_{\psi }<0$, take $\phi_{1}=(i_{0})$.
            This choice of $\phi_{1}$ ensures that $S_{\psi\phi_{1}}([0,1])=[c_{1},S_{\psi }(v)]$ for some $c_{1}<S_{\psi }(v)$.

        \item Given $\phi_{n},$ a word of length $n$ such that $S_{\psi }(v)\in S_{\psi \phi_{n}}([0,1])$, take $\phi_{n+1}=\phi_{n}i_{1}$ if $r_{\psi \phi_{n}}>0$, and take $\phi_{n+1}=\phi_{n}i_{0}$ if $r_{\psi \phi_{n}}<0$.
            Again, $S_{\psi \phi_{n+1}}([0,1])=[c_{n+1},S_{\psi }(v)]$.
    \end{itemize}

    Let $N$ be minimal so that $S_{\psi }(v)-c_{N}\leq \delta \alpha $ and thus $S_{\psi \phi_{N}}([0,1])\subseteq \lbrack c,d]$.
    Note that $S_{\psi }(v)-c_{N}=|r_{\phi_{N}}r_{\psi }|$, so by the minimality of $N$, $|r_{\psi }r_{\phi_{N}}|\geq \delta \alpha r_{\min }$.
    Since $|r_{\psi }|\leq \alpha $, that ensures $|r_{\phi_{N}}|\geq \delta r_{\min }$.
    If $r_{\psi \phi_{N}}>0,$ set $\phi =\phi_{N}$ and if $r_{\psi \phi_{N}}<0$, set $\phi =\phi_{N}j$ where $j$ is any index with $r_{j}<0$.
    Then $|r_{\phi }|\geq \delta r_{\min }^{2}=C$ and 
    \begin{equation*}
        S_{\sigma \phi }([0,1])\subseteq S_{\psi \phi_{N}}([0,1])\subseteq S_{\sigma }([0,1])\cap S_{\tau }([0,1])
    \end{equation*}
    as required.
\end{proof}

The assumption that $K=[0,1]$ is needed only in the proof of the next lemma.

\begin{lemma}\label{l:bsep}
    Suppose the IFS $\mathcal{S}$ has self-similar set $[0,1]$ and satisfies the weak separation condition.
    For each $\delta >0$, there exists a finite set $\mathcal{E}_{\delta }$ so that for any generation $\alpha >0$ and $\sigma ,\tau \in \Lambda_{\alpha }$, either 
    \begin{equation*}
        m(S_{\sigma }([0,1])\cap S_{\tau }([0,1]))<\delta \alpha \text{ or } S_{\sigma }^{-1}\circ S_{\tau }\in \mathcal{E}_{\delta }.
    \end{equation*}
\end{lemma}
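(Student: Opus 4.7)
The plan is to apply Lemma~\ref{l:delsep} to exhibit a distinguished sub-interval inside the overlap, and then combine this with Proposition~\ref{l:wsp-l} and the hypothesis $K=[0,1]$ to force $T := S_\sigma^{-1}\circ S_\tau$ into a finite set of similarities. Given $\sigma,\tau\in\Lambda_\alpha$ with $m(S_\sigma([0,1])\cap S_\tau([0,1]))\geq\delta\alpha$, Lemma~\ref{l:delsep} produces $\psi\in\{\sigma,\tau\}$ and a word $\phi$ with $|r_\phi|\geq C(\delta)$, $r_{\psi\phi}>0$, and $S_{\psi\phi}([0,1])\subseteq S_\sigma([0,1])\cap S_\tau([0,1])$. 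The collection of admissible $\phi$ is finite, since $|r_\phi|\geq C(\delta)$ bounds the length of $\phi$ uniformly. Swapping $\sigma\leftrightarrow\tau$ replaces $T$ by $T^{-1}$, so by closing the eventual $\mathcal{E}_\delta$ under inversion we may reduce to the case $\psi=\sigma$, which yields the containment $S_\phi([0,1])\subseteq T([0,1])$.

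Writing $T(x)=Lx+a$, this containment together with the generation bounds on $\sigma,\tau$ forces $|L|\in[|r_\phi|,1/r_{\min}]$ and pins $a$ into a bounded interval, so for each fixed $\phi$ the similarity $T$ varies over a compact set of affine maps. Finiteness of $\mathcal{E}_\delta$ therefore reduces to showing that the possible values of $T$ are discrete. For this I would choose a net interval $\Delta\in\mathcal{F}_\alpha$ contained in $S_\sigma([0,1])\cap S_\tau([0,1])$, which exists because the intersection has length at least $\delta\alpha$ and, thanks to $K=[0,1]$, its endpoints already lie in the generation-$\alpha$ endpoint set. Then both $T_\Delta^{-1}\circ S_\sigma$ and $T_\Delta^{-1}\circ S_\tau$ lie in $V(\Delta)$, and
\[
    T=(T_\Delta^{-1}\circ S_\sigma)^{-1}\circ(T_\Delta^{-1}\circ S_\tau).
\]
The rigid containment $S_\phi([0,1])\subseteq T([0,1])$ then constrains how the two neighbours in this composition can sit relative to one another, and combined with the uniform bound $\#V(\Delta)\leq M$ from Proposition~\ref{l:wsp-l} this should leave only finitely many possibilities for $T$. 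Taking the union over the finitely many choices of $\phi$ and the symmetric case $\psi=\tau$ yields the finite set $\mathcal{E}_\delta$.

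The main obstacle is the discreteness step. Under the weak separation condition alone, the set $\bigcup_\Delta V(\Delta)$ of all neighbours may well be infinite, so the bound $\#V(\Delta)\leq M$ is not by itself sufficient: one must actually show that the pairs of neighbours appearing in the factorisation above lie in a finite collection. This is where the hypothesis $K=[0,1]$ must enter essentially — the key point being that the reference sub-interval $S_{\psi\phi}([0,1])$ lies within the attractor itself, not merely within its convex hull — to argue, presumably via a compactness-plus-pigeonhole argument on the bounded parameter space of $T$, that only finitely many distinct $T$ can arise as the triple $(\sigma,\tau,\alpha)$ varies over all pairs with overlap at least $\delta\alpha$.
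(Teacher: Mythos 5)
Your setup is right as far as it goes: applying Lemma~\ref{l:delsep} to plant a sub-copy $S_{\psi\phi}([0,1])$ inside the overlap, noting that the admissible $\phi$ form a finite set, and reducing to showing that only finitely many maps $T=S_\sigma^{-1}\circ S_\tau$ can occur. But the proof has a genuine gap exactly where you flag it: the discreteness step is never carried out, and the two tools you invoke for it cannot supply it. Compactness of the parameter space of $T$ gives an uncountable family, not a finite one, and the bound $\#V(\Delta)\leq M$ from Proposition~\ref{l:wsp-l} only controls the cardinality of each individual neighbour set; it says nothing about whether the union $\bigcup_{\Delta}V(\Delta)$ is finite, which is essentially the conclusion you are trying to reach (it is equivalent to the finite neighbour condition, i.e.\ the theorem the lemma is feeding into). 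So factoring $T=(T_\Delta^{-1}\circ S_\sigma)^{-1}\circ(T_\Delta^{-1}\circ S_\tau)$ through a generation-$\alpha$ net interval $\Delta$ in the overlap is circular as a finiteness argument: the two factors range over an a priori infinite collection.

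The missing idea in the paper's proof is to factor through one \emph{fixed} net interval rather than a varying one. Choose $\Delta_0\in\mathcal{F}_\beta$ attaining the maximum number of neighbours (possible by Proposition~\ref{l:wsp-l}), and show that $\Delta_1=S_{\sigma\phi}(\Delta_0)$ is again a net interval, of generation $\gamma=|r_{\sigma\phi}|\beta$, with $V(\Delta_1)=V(\Delta_0)$: any intruding word of generation $\gamma$ would create a sub-net-interval with strictly more neighbours than $\Delta_0$, contradicting maximality. Because $K=[0,1]$, both $\sigma$ and $\tau$ extend to words $\sigma\psi_1,\tau\psi_2\in\Lambda_\gamma$ generating neighbours $T_1,T_2$ of $\Delta_1$, and these neighbours lie in the single finite set $V(\Delta_0)$. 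The connecting words satisfy $|r_{\psi_i}|\geq C\beta r_{\min}^2$, so there are finitely many of them, and
\begin{equation*}
    S_\sigma^{-1}\circ S_\tau=(T_1\circ S_{\psi_1}^{-1})^{-1}\circ(T_2\circ S_{\psi_2}^{-1})
\end{equation*}
ranges over the finite set $\mathcal{E}_\delta=\{f^{-1}\circ g: f,g\in\Gamma\}$ with $\Gamma=\{T\circ S_\psi^{-1}: |r_\psi|\geq C\beta r_{\min}^2,\ T\in V(\Delta_0)\}$. Note also that the hypothesis $K=[0,1]$ enters here (every ancestor word of generation $\alpha$ whose image contains a net interval of generation $\gamma$ has a descendant in $\Lambda_\gamma$ generating a neighbour of it), not merely in guaranteeing that the overlap contains a net interval as your sketch suggests. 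Without the maximal-neighbour-set transplant, your argument does not close.
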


\begin{proof}
    Fix $\delta >0$.
    Choose a net interval $\Delta_{0}$ with the maximum number of neighbours and assume $\Delta_{0}\in \mathcal{F}_{\beta }$.
    Proposition~\ref{l:wsp-l} guarantees this is possible.
    Fix $C=C(\delta )$ as in Lemma~\ref{l:delsep}, define 
    \begin{equation}
        \Gamma =\{T\circ S_{\psi }^{-1}:\psi \in \Sigma^{\ast },|r_{\psi }|\geq C\beta r_{\min }^{2},\text{ }T\in V(\Delta_{0})\},
    \end{equation}
    and put 
    \begin{equation*}
        \mathcal{E}_{\delta }=\{f^{-1}\circ g:f,g\in \Gamma \},
    \end{equation*}%
    which is a finite set since $\Gamma $ is finite.

    Let $\sigma ,\tau \in \Lambda_{\alpha }$ be arbitrary with $m(S_{\sigma}([0,1])\cap S_{\tau }([0,1]))\geq \delta \alpha $.
    Choose $\psi ,\phi $ from the conclusion of Lemma~\ref{l:delsep} where, without loss of generality, $\psi =\sigma$.
    Set $\gamma =|r_{\sigma }r_{\phi }|\beta $.

    \begin{claim}
        The interval $\Delta_{1}=S_{\sigma \phi }(\Delta_{0})$ is a net interval of generation $\gamma $ with $V(\Delta_{0})=V(\Delta_{1})$.
    \end{claim}

    \textsc{Proof (of Claim).}
    Let $\Delta_{0}$ have neighbours generated by $S_{\omega_{1}},...,S_{\omega_{m}}$ with $\omega_{i}\in \Lambda_{\beta }$.
    By definition of $\gamma $, $\{\sigma \phi \omega_{1},\ldots ,\sigma \phi \omega_{m}\}$ are words of generation $\Lambda_{\gamma }$.
    Note that $(\inte\Delta_1)\cap K\neq\emptyset$ and that the endpoints of $\Delta_1$ are of the form $S_{\sigma\phi\zeta}$ where $\zeta\in\Lambda_\beta$ so that $\sigma\phi\zeta\in\Lambda_\gamma$.
    In particular, if $\Delta_1\notin\mathcal{F}_\gamma$, then there exists some $\tau\in\Lambda_\gamma$ such that $S_\tau\notin\{S_{\sigma\phi\omega_1},\ldots,S_{\sigma\phi\omega_m}\}$ and $S_\tau([0,1])\cap(\inte \Delta_1)\neq\emptyset$.
    But then there exists some $\Delta_2\in\mathcal{F}_\gamma$ with $\Delta_2\subseteq\Delta_1\cap S_\tau([0,1])$, where $\Delta_2$ has distinct neighbours generated by $\{\omega_1,\ldots,\omega_m\}\cup\{\tau\}$, contradicting the maximality of $m$.
    Thus $\Delta_{1}=\Delta_{2}$ and $\Delta_{1}\in \mathcal{F}_{\gamma }$ with neighbours generated by the $\sigma \phi \omega_{i}$.
    Moreover, since $r_{\sigma \phi }>0$, we have $T_{\Delta_{1}}=S_{\sigma \phi }\circ T_{\Delta_{0}}$, so that 
    \begin{equation*}
        V(\Delta_{1})=\{T_{\Delta_{1}}^{-1}\circ S_{\sigma \phi \omega_{i}}\}_{i=1}^{m}=\{T_{\Delta_{0}}^{-1}\circ S_{\sigma \phi }^{-1}\circ S_{\sigma \phi }\circ S_{\omega_{i}}\}_{i=1}^{m}=V(\Delta_{0})
    \end{equation*}
    as claimed.

    Now we will show that $S_{\sigma }^{-1}\circ S_{\tau }\in \mathcal{E}_{\delta }$.
    Establishing this will complete the proof.
    Since $K=[0,1]$ and $\Delta_{1}\subseteq S_{\sigma }([0,1])\cap S_{\tau }([0,1])$, the words $\sigma $ and $\tau $ must be the prefixes of $\xi_{1},\xi_{2}\in \Lambda_{\gamma }$ which generate neighbours $T_{1},T_{2}$ of $\Delta_{1}$ respectively.
    Let $\xi_{1}=\sigma \psi_{1}$ and $\xi_{2}=\tau \psi_{2}$.
    Since $\xi_{i}\in \Lambda_{\gamma }$ and $\sigma ,\tau \in \Lambda_{\alpha }$, we have for each $i=1,2,$ 
    \begin{equation}\label{e:psbd}
        |r_{\psi_{i}}|\geq \frac{\gamma }{\alpha }r_{\min }\geq \frac{\alpha|r_{\phi }|\beta }{\alpha }r_{\min }^{2}\geq C\beta r_{\min }^{2}.
    \end{equation}
    But since $T_{\Delta_{1}}^{-1}\circ S_{\xi_{i}}=T_{i}$, we have 
    \begin{align*}
        S_{\sigma }^{-1}\circ S_{\tau }& =S_{\psi_{1}}\circ (S_{\xi_{1}}^{-1}\circ S_{\xi_{2}})\circ S_{\psi_{2}}^{-1} \\
                                       & =S_{\psi_{1}}\circ (T_{1}^{-1}\circ T_{\Delta_{1}}^{-1}\circ T_{\Delta_{1}}\circ T_{2})\circ S_{\psi_{2}}^{-1} \\
                                       & =(T_{1}\circ S_{\psi_{1}}^{-1})^{-1}\circ (T_{2}\circ S_{\psi_{2}}^{-1}),
    \end{align*}%
    and this is an element of $\mathcal{E}_{\delta }$ by (\ref{e:psbd}).
\end{proof}

\begin{theorem}\label{t:wsc-wft}
    Suppose the IFS $\mathcal{S}$ satisfies the weak separation condition and has self-similar set $[0,1]$.
    Then $\mathcal{S}$ has the finite neighbour condition.
\end{theorem}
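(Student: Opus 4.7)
By Theorem~\ref{t:wft-fs}, it suffices to show that $\mathcal{E}:=\mathcal{E}_{\mathcal{S}}((0,1))$ is finite. My strategy is to use the hypothesis $K=[0,1]$ to upgrade Lemma~\ref{l:bsep}: I aim to establish a uniform constant $\delta_0>0$ such that every overlapping pair $\sigma,\tau\in\Lambda_\alpha$ satisfies $m(S_\sigma([0,1])\cap S_\tau([0,1]))\geq\delta_0\alpha$. Granted this uniform lower bound, Lemma~\ref{l:bsep} applied with $\delta=\delta_0$ yields $\mathcal{E}\subseteq\mathcal{E}_{\delta_0}$, a finite set, and we are done.

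I would argue for the uniform lower bound by contradiction, assuming sequences $\sigma_n,\tau_n\in\Lambda_{\alpha_n}$ producing distinct maps $f_n:=S_{\sigma_n}^{-1}\circ S_{\tau_n}\in\mathcal{E}$ whose overlap intervals $I_n$ satisfy $|I_n|/\alpha_n\to 0$. The hypothesis $K=[0,1]$ enters essentially: since $K=\bigcup_j S_j([0,1])$, the boundary points $0,1$ must be fixed endpoints of some maps $S_{i_0},S_{i_1}$ (say $S_{i_0}(0)=0$ and $S_{i_1}(1)=1$ in the positive-contraction case), and moreover $I_n\subseteq K$ is covered by images of words at every finer scale. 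This allows me to extend $\sigma_n,\tau_n$ by boundary-fixing iterates into descendants $\sigma_n':=\sigma_n\phi_n$ and $\tau_n':=\tau_n\psi_n$ lying in a common $\Lambda_{\gamma_n}$ with $\gamma_n\asymp|I_n|$, whose images both fit inside $I_n$ but still overlap in an interval of size comparable to $\gamma_n$. Applying Lemma~\ref{l:bsep} with a fixed constant $\delta\in(0,1)$ then forces $g_n:=S_{\sigma_n'}^{-1}\circ S_{\tau_n'}$ into the finite set $\mathcal{E}_\delta$.

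The main obstacle, and technical heart of the proof, is to extract a contradiction from the recovery identity $f_n=S_{\phi_n}\circ g_n\circ S_{\psi_n}^{-1}$. The extensions $\phi_n,\psi_n$ generally have unbounded length (since they must shrink to match the vanishing $|I_n|/\alpha_n$), so a naive enumeration yields only a countable family rather than a finite one. Resolving this will require combining the common-generation constraint $|r_{\sigma_n}||r_{\phi_n}|\asymp|r_{\tau_n}||r_{\psi_n}|$ (which constrains the slope of $g_n$ to finitely many values), the WSC-bounded cardinality $\sup_\Delta\#V(\Delta)<\infty$ from Proposition~\ref{l:wsp-l} (which restricts how many words at a given scale can carry images near the fixed endpoints of $K$), and the self-similar structure of $K=[0,1]$ near its boundary. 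Following the ideas in Feng's work~\cite{F15}, cited by the authors as their inspiration, the combinatorial bookkeeping should reduce the recovery data to only finitely many possible $f_n$, contradicting their assumed distinctness and completing the proof.
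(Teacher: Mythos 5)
Your reduction is the right one, and it matches the paper's: by Theorem~\ref{t:wft-fs} it suffices to show that $\mathcal{E}_{\mathcal{S}}((0,1))$ is finite, and the paper does this exactly as you propose, by establishing a uniform constant $\epsilon>0$ such that every overlapping pair $\sigma,\tau\in\Lambda_{\alpha}$ has $m(S_{\sigma}([0,1])\cap S_{\tau}([0,1]))\geq\epsilon\alpha$ and then invoking Lemma~\ref{l:bsep}. The problem is that your proof of this uniform lower bound is not a proof: you describe a contradiction scheme with sequences and then explicitly defer the decisive step (``the combinatorial bookkeeping should reduce the recovery data to only finitely many possible $f_n$''). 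That step is precisely where the difficulty lives. Passing to descendants $\sigma_n\phi_n,\tau_n\psi_n$ at scale $\gamma_n\asymp|I_n|$ and writing $f_n=S_{\phi_n}\circ g_n\circ S_{\psi_n}^{-1}$ with $g_n$ in a finite set does not by itself bound anything, because $\phi_n,\psi_n$ have unbounded length and the map $(\phi,g,\psi)\mapsto S_{\phi}\circ g\circ S_{\psi}^{-1}$ has infinite image; you have not supplied the mechanism that collapses this to a finite set. (There is also a secondary unaddressed point: you need the two descendants to overlap in a set of length comparable to $\gamma_n$, which is not automatic from both fitting inside $I_n$; Lemma~\ref{l:delsep} only places a single descendant of \emph{one} of the two words inside the intersection.)

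The paper avoids all of this by running the contradiction in the opposite direction: it takes $\alpha$ \emph{maximal} for which the bound fails and compares $\sigma,\tau$ with their one-step prefixes $\sigma',\tau'\in\Lambda_{\alpha'}$. The crucial observation is that if, say, the right endpoint of $S_{\sigma'}([0,1])$ strictly exceeds that of $S_{\sigma}([0,1])$, then the two endpoints differ by $|r_{\sigma^{-}}|\,|v-S_{i}(u)|\geq\delta\alpha'$, where $\delta$ is built from the finitely many nonzero quantities $|v-S_{i}(u)|$; so the \emph{parent} overlap is automatically large, Lemma~\ref{l:bsep} places $S_{\sigma'}^{-1}\circ S_{\tau'}$ in the finite set $\mathcal{E}_{\delta}$, hence $S_{\sigma}^{-1}\circ S_{\tau}$ lies in the finite set $\mathcal{G}=\{g^{-1}\circ f\circ h\}$ with $g,h$ ranging over single letters and the identity, and a compactness constant $\epsilon_{2}$ for $\mathcal{G}$ forces the child overlap to be at least $\epsilon\alpha$ after all, contradicting maximality. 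Here the conjugating words have length at most one, so finiteness is immediate and no recovery bookkeeping is needed. Unless you can make your finiteness-of-recovery-data step precise, you should replace it with this parent-generation argument.
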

\begin{proof}
    Assume $\mathcal{S}=\{S_{i}\}_{i=1}^{k}$.
    Set 
    \begin{equation*}
        \delta =r_{\min }\cdot \min (\{|v-S_{i}(u)|:1\leq i\leq k,u,v\in \{0,1\},v\neq S_{i}(u)\}>0
    \end{equation*}
    and let $\mathcal{E}_{\delta }$ be the corresponding finite set as in Lemma~\ref{l:bsep}.
    Put 
    \begin{equation*}
        \mathcal{G}=\left\{ g^{-1}\circ f\circ h:f\in \mathcal{E}_{\delta };g,h\in \{\Id,S_{1},\ldots ,S_{k}\}\right\}
    \end{equation*}
    and again note that $\mathcal{G}$ is a finite set.
    We may now define 
    \begin{align*}
        \epsilon_{1}& :=\min \left\{ m(S_{\phi }([0,1])\cap S_{\psi}([0,1])):|r_{\phi }|,|r_{\psi }|\geq r_{\min }^{2},S_{\phi }((0,1))\cap S_{\psi }((0,1))\neq \emptyset \right\} \\
        \epsilon_{2}& :=\min \left\{ m([0,1]\cap f([0,1])):f\in \mathcal{G}\text{ and }f([0,1])\cap (0,1)\neq \emptyset \right\} .
    \end{align*}
    Fix 
    \begin{equation*}
        0<\epsilon \leq \min \{\epsilon_{1},r_{\min }\epsilon_{2}\}
    \end{equation*}
    and note that $\epsilon\leq r_{\min}$.

    It was shown in Theorem~\ref{t:wft-fs} that $\mathcal{S}$ has the finite neighbour condition if and only if $\mathcal{E}_{\mathcal{S}}((0,1))$ (as defined in (\ref{ESNotation})) is finite.
    We will show that $\mathcal{E}_{\mathcal{S}}((0,1))$ is finite by proving the following claim.

    \begin{claim}
        For any $\alpha >0$ and $\sigma ,\tau \in \Lambda_{\alpha }$ with $S_{\sigma }((0,1))\cap S_{\tau }((0,1))\neq \emptyset$, we have
        \begin{equation*}
            m(S_{\sigma }([0,1])\cap S_{\tau }([0,1]))\geq \epsilon \alpha .
        \end{equation*}
    \end{claim}

    Once the claim is verified, we are done since Lemma~\ref{l:bsep} will imply $\mathcal{E}_{\mathcal{S}}((0,1))$ is contained in the finite set $\mathcal{E}_{\epsilon }$ defined in that lemma.

    \textsc{Proof (of Claim).}
    Assume the claim is false.
    Then there exists some $0<\alpha\leq 1$ and $\sigma ,\tau \in \Lambda_{\alpha }$ such that $S_{\sigma}((0,1))\cap S_{\tau }((0,1))\neq \emptyset$, but 
    \begin{equation}\label{contr}
        m(S_{\sigma }([0,1])\cap S_{\tau }([0,1]))<\epsilon \alpha.
    \end{equation}
    Choose $\alpha $ maximal with this property.
    Observe that the choice of $\epsilon \leq \epsilon_{1}$ ensures $\sigma $ and $\tau $ are both words of length at least two.
    This is because if, say, $\sigma $ had length at most one, then $\alpha \geq r_{\min }$.
    Consequently, $|r_{\sigma }|,|r_{\tau}|\geq r_{\min }^{2}$ and thus the definition of $\epsilon_{1}$ would imply that 
    \begin{equation*}
        m\bigl(S_{\sigma }([0,1])\cap S_{\tau }([0,1])\bigr)\geq \epsilon_{1}\geq \epsilon \alpha,
    \end{equation*}%
    which is false.

    Thus we can let $\alpha^{\prime }=\min \{|r_{\sigma^{-}}|,|r_{\tau^{-}}|\}\geq \alpha $ and obtain prefixes $\sigma^{\prime },\tau^{\prime }$ of $\sigma $ and $\tau $ respectively, with $\sigma^{\prime },\tau^{\prime}\in \Lambda_{\alpha^{\prime }}$.
    Note that $(\sigma^{\prime },\tau^{\prime })$ is one of $(\sigma^{-},\tau )$, $(\sigma ,\tau^{-})$ or $ (\sigma^{-},\tau^{-})$.

    We first show that $m(S_{\sigma^{\prime }}([0,1])\cap S_{\tau^{\prime}}([0,1]))\geq \delta \alpha $.
    For notational simplicity, write
    \begin{align*}
        S_{\sigma }([0,1])& =[a,b],& S_{\tau }([0,1])& =[c,d],\\
        S_{\sigma^{\prime }}([0,1])& =[a^{\prime },b^{\prime }] ,\text{ and }& S_{\tau^{\prime}}([0,1])& =[c^{\prime },d^{\prime }].
    \end{align*}
    Since $\epsilon <r_{\min }$, by swapping the roles of $\sigma $ and $\tau$, if necessary, we may assume 
    \begin{equation*}
        S_{\sigma }([0,1])\cap S_{\tau }([0,1])=[c,b],
    \end{equation*}%
    for otherwise, (without loss of generality) $S_{\sigma }([0,1])\subseteq S_{\sigma }([0,1])\cap S_{\tau }([0,1])$ and then $m(S_{\sigma }([0,1])\cap S_{\tau }([0,1]))\geq r_{\min }\alpha \geq \epsilon \alpha $.

    Since $S_{\sigma^{\prime }}([0,1])\supseteq S_{\sigma }([0,1])$, $b^{\prime}\geq b$ and similarly $c^{\prime }\leq c$.
    Moreover, by the maximality of $\alpha $, we cannot have $b=b^{\prime }$ and $c=c^{\prime }$.

    If $b^{\prime }>b$, then $\sigma^{\prime }=\sigma^{-}$.
    Suppose $b=S_{\sigma }(u)$ and $b^{\prime }=S_{\sigma^{-}}(v)$ where $u,v\in \{0,1\}$, and write $\sigma =\sigma^{-}i$.
    Then 
    \begin{equation*}
        b^{\prime }-b=r_{\sigma }(v-S_{i}(u))=|r_{\sigma }||v-S_{i}(u)|,
    \end{equation*}
    where $v\neq S_{i}(u)$ since $b\neq b^{\prime }$.
    By definition of $\delta$, we have $b^{\prime }-b\geq \delta \alpha $, so that 
    \begin{equation*}
        m\bigl(S_{\sigma^{\prime }}([0,1])\cap S_{\tau^{\prime }}([0,1])\bigr)\geq \delta
        \alpha .
    \end{equation*}
    The case $c^{\prime }<c$ follows similarly.

    This proves that $S_{\sigma^{\prime }}^{-1}\circ S_{\tau^{\prime }}\in\mathcal{E}_{\delta }$.
    Since $S_{\sigma }^{-1}\circ S_{\tau }=g^{-1}\circ S_{\sigma^{\prime }}^{-1}\circ S_{\tau^{\prime }}\circ h$ for some $g,h\in \{\Id,S_{1},\ldots ,S_{k}\}$, we conclude that $S_{\sigma}^{-1}\circ S_{\tau }\in \mathcal{G}$.
    Therefore 
    \begin{equation*}
        m\bigl( S_{\sigma }^{-1}\circ S_{\tau }([0,1])\cap \lbrack 0,1]\bigr) \geq \epsilon_{2}
    \end{equation*}%
    and thus 
    \begin{equation*}
        m\bigl( S_{\sigma }([0,1])\cap S_{\tau }([0,1])\bigr) \geq \left\vert r_{\sigma }\right\vert \epsilon_{2}\geq \epsilon \alpha ,
    \end{equation*}%
    which contradicts our initial assumption (\ref{contr}).
\end{proof}

Combined with the earlier results of the paper we have the following
equivalences.

\begin{corollary}\label{c:wsc-equiv}
    Suppose the IFS $\mathcal{S}$ has self-similar set $[0,1]$. The following
    are equivalent:

    \begin{enumerate}
        \item $\mathcal{S}$ satisfies the weak separation condition;
        \item $\mathcal{S}$ satisfies the finite neighbour condition;
        \item $\mathcal{S}$ satisfies the convex generalized finite type condition;
        \item There exists some $c>0$ such that for any $0<\alpha \leq 1,$ words $\sigma ,\tau \in \Lambda_{\alpha }$ and $z,w\in \{0,1\}$, either $S_{\sigma}(z)=S_{\tau }(w)$ or $\left\vert S_{\sigma }(z)-S_{\tau }(w)\right\vert \geq c\alpha$.
    \end{enumerate}
\end{corollary}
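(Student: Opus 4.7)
The plan is to establish the cycle of implications $(1)\Rightarrow(2)\Rightarrow(3)\Rightarrow(4)\Rightarrow(1)$ almost entirely by citing results already proved earlier in the paper. The equivalence $(2)\Leftrightarrow(3)$ is exactly Theorem~\ref{t:wft-fs}, and does not use the hypothesis $K=[0,1]$. The implication $(1)\Rightarrow(2)$ is the main result Theorem~\ref{t:wsc-wft}; this is the only step that requires $K=[0,1]$. The implication $(2)\Rightarrow(4)$ is the content of the (unlabelled) theorem stated immediately before the corollary. This leaves one missing link, namely $(4)\Rightarrow(1)$, to close the cycle.

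To prove $(4)\Rightarrow(1)$, the plan is to verify the characterization $\sup_{\Delta\in\mathcal{F}}\#V(\Delta)<\infty$ from Proposition~\ref{l:wsp-l}. Fix an arbitrary $\Delta=[a_0,b_0]\in\mathcal{F}_\alpha$. Any neighbour of $\Delta$ has the form $T_\Delta^{-1}\circ S_\sigma$ for some $\sigma\in\Lambda_\alpha$ with $S_\sigma([0,1])\supseteq\Delta$; since $S_\sigma$ is an affine similarity, it is determined by the ordered pair $(S_\sigma(0),S_\sigma(1))$, and hence so is the resulting neighbour. Because $|r_\sigma|\leq\alpha$ and $S_\sigma([0,1])\supseteq[a_0,b_0]$, the two endpoints of $S_\sigma([0,1])$ must lie in $[a_0-\alpha,a_0]\cup[b_0,b_0+\alpha]$, a set of total length $2\alpha$. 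By hypothesis $(4)$, any two distinct values of $S_{\sigma'}(z)$ with $\sigma'\in\Lambda_\alpha$ and $z\in\{0,1\}$ are separated by at least $c\alpha$, so at most $O(1/c)$ such values occur in that set. Hence the number of admissible endpoint pairs, and therefore $\#V(\Delta)$, is bounded uniformly in $\Delta$, and Proposition~\ref{l:wsp-l} yields the weak separation condition.

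There is essentially no obstacle: the hard mathematical work is absorbed into Theorem~\ref{t:wsc-wft} (via Lemmas~\ref{l:delsep} and~\ref{l:bsep}), which delivers the only nontrivial direction $(1)\Rightarrow(2)$. Everything else is either a direct citation of an earlier result or the short endpoint-counting argument sketched above; assembling the equivalence cycle is pure bookkeeping once one recognizes that Proposition~\ref{l:wsp-l} turns hypothesis~$(4)$ into a uniform bound on neighbour sets.
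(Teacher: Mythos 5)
Your proposal is correct and takes essentially the same route as the paper, which states this corollary without proof as a combination of Theorem~\ref{t:wsc-wft}, Theorem~\ref{t:wft-fs}, and the unlabelled separation theorem at the end of Section~\ref{S:WFTC}. The only link the paper leaves implicit, $(4)\Rightarrow(1)$, is closed soundly by your endpoint-counting argument: since $|r_\sigma|<\alpha$ forces both endpoints of $S_\sigma([0,1])\supseteq\Delta$ into $[a_0-\alpha,a_0]\cup[b_0,b_0+\alpha]$, hypothesis $(4)$ bounds the number of admissible endpoint pairs and hence $\#V(\Delta)$ uniformly, which is exactly the criterion of Proposition~\ref{l:wsp-l} (and indeed mirrors the forward half of its proof).
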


\begin{corollary}
    Suppose the IFS $\mathcal{S}$ has self-similar set $[0,1]$ and commensurate contraction factors.
    If $\mathcal{S}$ satisfies the weak separation condition, then $\mathcal{S}$ satisfies \FTCco{}.
\end{corollary}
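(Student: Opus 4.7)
The plan is to assemble three results already established in the paper; the corollary is essentially immediate from the main theorem combined with a known reduction from \GFTCco{} to \FTCco{} in the commensurate case.

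First I would apply Theorem~\ref{t:wsc-wft} to the hypotheses that $K=[0,1]$ and that $\mathcal{S}$ satisfies the WSC. This yields at once that $\mathcal{S}$ has the finite neighbour condition, with no further work needed.

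Next, I would invoke Theorem~\ref{t:wft-fs}, which asserts that the finite neighbour condition and \GFTCco{} are equivalent, to conclude that $\mathcal{S}$ satisfies \GFTCco{}. At this stage, the commensurate contraction hypothesis has not yet been used. Finally, using the remark following Corollary~\ref{WFTC=GFTC} (which, as noted in the introduction, is due to \cite{DLN}), any IFS satisfying \GFTCco{} whose contraction factors are logarithmically commensurate automatically satisfies \FTCco{}. Combining this with the commensurate contraction assumption in the statement completes the proof.

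There is no genuine obstacle here: the corollary is a purely formal consequence of Theorem~\ref{t:wsc-wft}, Theorem~\ref{t:wft-fs}, and the \GFTCco{}-to-\FTCco{} passage in the commensurate setting. All the substantive work has already been absorbed into those earlier results, most notably Theorem~\ref{t:wsc-wft} itself.
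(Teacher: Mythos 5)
Your proposal is correct and follows exactly the route the paper intends: Theorem~\ref{t:wsc-wft} gives the finite neighbour condition, Theorem~\ref{t:wft-fs} upgrades this to \GFTCco{}, and the commensurability hypothesis then yields \FTCco{} via the remark following Corollary~\ref{WFTC=GFTC} (equivalently, the definition of FTC as GFTC plus logarithmic commensurability). The paper offers no separate argument for this corollary precisely because it is the formal consequence you describe.
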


In \cite{F15}, Feng obtained this conclusion under the additional assumption of a positive, equicontractive IFS.

\begin{remark}
    It would be interesting to know if it is always true that the weak separation condition implies the finite neighbour condition for IFS in $\R$ (without any additional assumptions).
    We do not even know if an IFS that satisfies the open set condition, where the bounded invariant open set $V$ is not a finite union of intervals, necessarily satisfies the finite neighbour condition.
\end{remark}

\end{document}